\newcommand{\ddd}{\vec}
\newcommand{\rrr}{\hat}
\newtheorem{rem}{Remark}[section]
\newtheorem{rem*}{Remark}[section]
\newtheorem{assmpt}{Assumption}[section]
\newtheorem{assmpt*}{Assumption}[section]
\providecommand{\tabularnewline}{\\}
\numberwithin{equation}{section}
\numberwithin{figure}{section}
\theoremstyle{plain}
\newtheorem{thm}{\protect\theoremname}
  \theoremstyle{plain}
  \newtheorem{cor}[thm]{\protect\corollaryname}
  \theoremstyle{plain}
  \newtheorem{lem}[thm]{\protect\lemmaname}
 \theoremstyle{definition}
 \newtheorem*{defn*}{\protect\definitionname}
  \providecommand{\corollaryname}{Corollary}
  \providecommand{\definitionname}{Definition}
  \providecommand{\lemmaname}{Lemma}
\providecommand{\theoremname}{Theorem}
\begin{document}

\title{Convergence Rates for Hierarchical Gibbs Samplers}

\author{Oliver Jovanovski{*}}

\author{Neal Madras}

\maketitle
\emph{Department of Mathematics and Statistics, York University, 4700
Keele Street, Toronto, Ontario M3J 1P3}
\begin{abstract}
We establish results for the rate of convergence in total variation 
of a Gibbs sampler to its equilibrium distribution. This sampler is
motivated by a hierarchical Bayesian inference construction for a
gamma random variable. Our results apply to a wide range of parameter
values for the case that the hierarchical depth is 3 or 4. Our method involves showing
a relationship between the total variation of two ordered copies of
our chain and the maximum of the ratios of their respective coordinates.
We construct auxiliary stochastic processes to show that this ratio
does converge to 1 at a geometric rate.
\end{abstract}
\bigskip{}

\begin{tabular}{cl}
\emph{Key words and phrases:} & Convergence Rate, Hierarchical Gibbs Sampler, Markov Chain, Coupling, \tabularnewline
 & Gamma Distribution, Stochastic Monotonicity\tabularnewline
\end{tabular}

\bigskip{}

\section{Introduction}

A basic purpose of Markov chain Monte Carlo (MCMC) is to generate samples 
from a given ``target'' probability distribution by inventing a Markov chain that 
has the target as its equilibrium, and then sampling from long runs of this chain.
There is a significant amount of theory showing that a Markov chain
satisfying some fairly general conditions (see for example \cite{key-1})
will converge to an equilibrium in distribution, as well as  in the
stronger measure of total variation. Mere knowledge of convergence
is often not enough, and it is of both theoretical and practical
interest to consider the rate at which convergence proceeds. In particular,
deriving an upper bound on the rate of convergence would provide a
rigorous degree of certainty as to how far this Markov chain
is from its equilibrium distribution, and it would help assess
the efficiency of this sampling procedure.

This has been our main objective in this paper, where the model in
question is motivated by the following hierarchical Bayesian inference
scenario.  We are given a real number $x>0$ with the information that
it was drawn from a $\Gamma\left(a_{1},u_{1}\right)$ distribution,
i.e.\ the Gamma distribution with probability density function 
\[
f\left(z\right)=\frac{u_{1}^{a_{1}}}{\Gamma\left(a_{1}\right)}z^{a_{1}-1}e^{-zu_{1}}
\hspace{5mm}(z>0)\,.
\]
Here the shape parameter $a_{1}>0$ is fixed, but the inverse scale parameter $u_{1}$ is
itself the product of random sampling from an independent $\Gamma\left(a_{2},u_{2}\right)$
distribution. Once again we assume that $a_{2}>0$ is a given
constant, while $u_{2}$ is sampled in an analogous manner. This process
continues until we reach $u_{n}\sim\Gamma\left(a_{n+1},b\right)$,
where now both $a_{n+1}>0$ and $b>0$ are given. The joint
density of $\left(x,u_{1},\ldots,u_{n}\right)$ is therefore proportional to 
\begin{equation}
  p\left(x,z_{1},\ldots,z_{n}\right)\propto x^{a_{1}-1}\left(\prod_{i=1}^{n}z_{i}^{a_{i}+a_{i+1}-1}\right)
  \exp\left(\sum_{i=1}^{n+1}-z_{i}z_{i-1}\right)\label{eq:jdensity}
\end{equation}
where for convenience we set $z_{0}:=x$ and $z_{n+1}:=b$. We conclude
from (\ref{eq:jdensity}) that for $1\leq i\leq n$, the conditional
distribution of $u_{i}$ given everything else is 
\[
u_{i}\left|x,u_{j\neq i}\right.\sim\Gamma\left(a_{i}+a_{i+1},u_{i-1}+u_{i+1}\right)
\]
Therefore the resulting posterior distribution of $u=\left(u_{1},\ldots,u_{n}\right)$
(i.e. given $x$ as well as all other parameters) has the density function 
\begin{equation}
  g(z_{1},\ldots,z_{n})\propto\left(\prod_{i=1}^{n}z_{i}^{a_{i}+a_{i+1}-1}\right)
   \exp\left(\sum_{i=1}^{n+1}-z_{i}z_{i-1}\right)\label{eq:equilibrumdensity}
\end{equation}

Such Bayesian inference networks have been a popular statistical representations
used to handle problems ranging from sports predictions and gambling
to genetics, disease outbreak detection and artificial intelligence
(\cite{key-11}, \cite{key-12}, \cite{key-13}, \cite{key-14}) .
Gibbs samplers are frequently associated with problems in Bayesian
inference, which is also the case for the problem considered in this paper. 
Similar constructions have appeared in numerous
statistical models used in a variety of applications. In information
retrieval related to search engines, hierarchical models are used
to decide how to represent documents based on relevant queries (see
for example \cite{key-15}). Multi-Population Haplotype Phasing is
a problem in statistical genetics where hierarchical Bayesian models
can be used to represent genotypes ( e.g. \cite{key-16}), and in
market research similar models are used in predicting buyer behaviour
and decision making (\cite{key-17}). 

The Gibbs sampler \cite{key-4-1} has been a very popular MCMC algorithm
for obtaining a sample from a probability distribution that is 
difficult to sample from directly. In its fundamental form, this algorithm works
on a vector $u$ by selecting (systematically, randomly or otherwise)
one of the vector's components $u_{i}$ and updating this component
only, by drawing from the probability distribution of $u_{i}$ given
$\left(u_{j\neq i}\right)$. 

General convergence results have been derived for some Gibbs samplers
(e.g. \cite{key-4-2}), however due to their limitations it is often
not possible to infer quantitative bounds directly from these results.

In this paper we will focus on the case when $n=4$, with a short section dedicated to the immediate results that follow for the case $n=3$. For values of $n>4$ we refer the reader to \cite{key-18}, where we derive similar results under stricter constraints on the parameters.

\subsection{The problem}

	Our aim is to construct a
Gibbs sampler on $\mathbb{R}_{+}^{4}$ and show that it converges rapidly to the target
distribution with density function given by (\ref{eq:equilibrumdensity}) with $n=4$. For $n>4$, we give a similar approach in \cite{key-18}. 

\underline{Notation}:  We shall write $\ddd{u} = (u_1,u_2,u_3,u_4)$ for points in $\mathbb{R}^{4}$.
We shall often refer to points of $\mathbb{R}^2$ consisting of the second and fourth entries of
$\ddd{u}$.  We shall then omit the $\ddd{\quad}$ and write $u=(u_2,u_4)$.

We first consider the Markov chain which sequentially updates
its coordinates as follows. For $i\in\left\{ 1,2,3,4 \right\}$, let 
\[
   \bar{P}_{i}\left(\ddd{v},d\ddd{w}\right):=\left(\prod_{j\neq i}\delta_{v_{j}}(w_{j})\right)
   h_{i}(w_{i}\left|\ddd{v}\right.)\,dw_{i}
\]
where $h_{i}(\cdot\left|\ddd{v}\right.)$ is the $\Gamma(a_i+a_{i+1},v_{i-1}+v_{i+1})$ density function 
given $\ddd{v}$, and where for convenience we have defined $v_{0}:=x$
and $v_{5}:=b$. In other words, $\bar{P}_{i}$ is the probability kernel that updates (only)
the $i^{th}$ coordinate according to the conditional density $h_{i}$.  Now define
\begin{equation}
\bar{P}:=\bar{P}_{1}\bar{P}_{3}\bar{P}_{2}\bar{P}_{4},  \label{eq:pbar}
\end{equation}
the Gibbs sampler Markov chain that updates the odd coordinates and then the even coordinates.
We will show that $\bar{P}$  converges to equilibrium at a geometric rate, and we will give a bound on the rate of convergence. 

To describe distance from equilibrium, we use the total variation metric $d_{TV}$, 
which is defined as follows.  For two probability measures $\mu_1$ and $\mu_2$ on
the same state space $\Omega$, define 
$d_{TV}(\mu_1,\mu_2) \,:=\,\inf\,\mathbb{P}(X_1\neq X_2)$,
where the infimum is over all joint distributions $\mathbb{P}$ of $(X_1,X_2)$ such that 
$X_1\sim \mu_1$ and $X_2\sim\mu_2$.  
If $Y_i$ denotes a random variable with distribution $\mu_i$, then we shall also write 
$d_{TV}(Y_1,Y_2)$
and $d_{TV}(Y_1,\mu_2)$ for $d_{TV}(\mu_1,\mu_2)$.
It is known (e.g.\ Chapter I of \cite{key-2}) that the infimum 
is achieved by some $\mathbb{P}$,
and that we can also express  $d_{TV}(\mu_1,\mu_2)$ as the supremum of 
$|\mu_1(A)-\mu_2(A)|$ over all measurable $A\subset \Omega$.  

It will be useful to represent our Markov chain using iterated random functions (\cite{DiaFre},
\cite{MadSez})
as follows.  Let $\{\gamma_i^t: \,t=1,2,\ldots, i=1,2,3,4\}$ be a collection of independent
random variables with each $\gamma_i^t$ having the $\Gamma(a_i+a_{i+1},1)$ distribution.  
Then define the  sequence of
random functions $\bar{F}^t:\mathbb{R}_+^4\rightarrow\mathbb{R}_+^4$ ($t=1,2,\ldots$) by
\begin{eqnarray}
    \nonumber
    \bar{F}^t(\ddd{u})  &= & (\bar{F}^t_1(\ddd{u}),\,\bar{F}^t_2(\ddd{u}),\,\bar{F}^t_3(\ddd{u}),\,
    \bar{F}^t_4(\ddd{u}))   \\
       \label{eq:mc4D}
       & = & \left(   \frac{\gamma^t_1}{x+u_2}, \;\frac{\gamma^t_2}{\frac{\gamma^t_1}{x+u_2}+
        \frac{\gamma^t_3}{u_2+u_4}}, \; \frac{\gamma^t_3}{u_2+u_4},\;
        \frac{\gamma^t_4}{b+ \frac{\gamma^t_3}{u_2+u_4}} \right) .
\end{eqnarray}
Then for any initial $\ddd{u}^0\in \mathbb{R}_+^4$, the random sequence 
$\ddd{u}^0,\ddd{u}^1,\ldots$ defined recursively by $\ddd{u}^{t+1}=\bar{F}^{t+1}(\ddd{u}^t)$ 
is a Markov chain with transition kernel $\bar{P}$.

Observe that $\bar{F}^t(\ddd{u})$ does not depend on $u_1$ or $u_3$.
It follows that if $\{u^t\}$ is a version of the Markov chain (\ref{eq:pbar}), then the sequence
$\{\left(u_{2}^{t},u_{4}^{t}\right)\}$ is itself a Markov chain in $\mathbb{R}^2_+$. 
Accordingly, we define the  random functions 
$F^t:\mathbb{R}_+^2\rightarrow\mathbb{R}_+^2$  ($t=1,2,\ldots$) by
\[
    F^t(u_2,u_4) \;=\; \left( F_2^t(u_2,u_4),F_4^t(u_2,u_4)\right)  \;=\;
     \left(\frac{\gamma^t_2}{\frac{\gamma^t_1}{x+u_2}+ \frac{\gamma^t_3}{u_2+u_4}}, \;
        \frac{\gamma^t_4}{b+ \frac{\gamma^t_3}{u_2+u_4}} \right) .
\]
Thus $\bar{F}^t_i(\ddd{u})=F^t_i(u_2,u_4)$ for $i=2,4$ and all $\ddd{u}\in \mathbb{R}_+^4$ 
and all $t$.
Moreover, the Markov chain $\{\left(u_{2}^{t},u_{4}^{t}\right)\}$ is given by the random 
recursion 
\begin{equation}
     (u_{2}^{t+1},u_{4}^{t+1})\,=\,F^{t+1}(u^t_2,u^t_4).
 \label{eq:mc2}
\end{equation}

Let $\bar{\pi}$ be the probability measure on $\mathbb{R}_{+}^{4}$
with density function (\ref{eq:equilibrumdensity}).  Then it is well
known (e.g. see Section 2.3 of \cite{key-9}) that $\bar{\pi}$
is the equilibrium distribution of the Markov chain defined by (\ref{eq:pbar}).
It follows that the marginal distribution of the even coordinates
of $\bar{\pi}$, which we denote by $\pi$, is the equilibrium distribution
of (\ref{eq:mc2}). Furthermore, the following simple argument illustrates that it suffices to 
bound the distance to equilibrium of  (\ref{eq:mc2}).

\begin{lem}
\label{lem:s4relate}Let $\ddd{\Upsilon}^{t}$ be a copy of the Markov chain
(\ref{eq:pbar}) on $\mathbb{R}_{+}^{4}$ and let $\Phi^{t}$
be a copy of (\ref{eq:mc2}) on $\mathbb{R}_{+}^{2}$.  
Assume $(\Upsilon^0_2,\Upsilon^0_4)\,=\,\Phi^0$, i.e.\ the initial conditions agree.
Then $d_{TV}\left(\ddd{\Upsilon}^{t+1},\bar{\pi}\right)\leq d_{TV}\left(\Phi^{t},\pi\right)$.
\end{lem}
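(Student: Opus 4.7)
The plan is to construct an explicit coupling of $\ddd{\Upsilon}^{t+1}$ with some $\ddd{W}\sim\bar{\pi}$, and then to read the bound off the characterization of $d_{TV}$ as an infimal mismatch probability. The key structural observation is already recorded in the text just above: the map $\bar{F}^{t+1}(\ddd{u})$ depends only on $(u_{2},u_{4})$, not on the odd coordinates. This will do all the work.

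First I would take an optimal coupling of $\Phi^{t}$ with some $V\sim\pi$, achieving $\mathbb{P}(V\neq\Phi^{t})=d_{TV}(\Phi^{t},\pi)$, and arrange that this coupling is independent of the gamma variables $\gamma^{t+1}_{1},\gamma^{t+1}_{2},\gamma^{t+1}_{3},\gamma^{t+1}_{4}$ that drive step $t+1$. At no cost to any marginal distribution, I may further assume that $(\Upsilon^{t}_{2},\Upsilon^{t}_{4})=\Phi^{t}$ almost surely, since both processes evolve on $\mathbb{R}^{2}_{+}$ by the same recursion (\ref{eq:mc2}) from the same initial law and can therefore be driven by common gammas at times $1,\ldots,t$.

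Next I would define $\ddd{W}:=\bar{F}^{t+1}(\ddd{v})$ for any vector $\ddd{v}\in\mathbb{R}^{4}_{+}$ whose even coordinates equal $V$; by the displayed observation, $\ddd{W}$ is unambiguously a function of $V$ and $\gamma^{t+1}$ alone. I claim $\ddd{W}\sim\bar{\pi}$. To see this, let $\ddd{U}\sim\bar{\pi}$ be independent of $\gamma^{t+1}$; since $\pi$ is the marginal of $\bar{\pi}$ on the even coordinates, $(U_{2},U_{4})\stackrel{d}{=}V$. Stationarity of $\bar{\pi}$ under $\bar{P}$ gives $\bar{F}^{t+1}(\ddd{U})\sim\bar{\pi}$; the fact that $\bar{F}^{t+1}$ depends on its input only through the even coordinates then yields $\ddd{W}\stackrel{d}{=}\bar{F}^{t+1}(\ddd{U})\sim\bar{\pi}$.

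Finally, on the event $\{V=\Phi^{t}\}$ we have $V_{2}=\Upsilon^{t}_{2}$ and $V_{4}=\Upsilon^{t}_{4}$, so the blindness of $\bar{F}^{t+1}$ to the odd inputs forces $\ddd{W}=\bar{F}^{t+1}(\ddd{\Upsilon}^{t})=\ddd{\Upsilon}^{t+1}$. Therefore
\[
  d_{TV}\bigl(\ddd{\Upsilon}^{t+1},\bar{\pi}\bigr)\;\leq\;\mathbb{P}\bigl(\ddd{\Upsilon}^{t+1}\neq\ddd{W}\bigr)\;\leq\;\mathbb{P}\bigl(V\neq\Phi^{t}\bigr)\;=\;d_{TV}(\Phi^{t},\pi).
\]
The only real subtlety is the verification that $\ddd{W}\sim\bar{\pi}$, which rests jointly on the stationarity of $\bar{\pi}$ and on $\bar{F}^{t+1}$ being independent of the odd input coordinates; the remainder is bookkeeping.
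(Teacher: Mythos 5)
Your proposal is correct and follows essentially the same route as the paper: take an optimal coupling of $\Phi^{t}$ with $\pi$, push both marginals through $\bar{F}^{t+1}$ with common gamma variables, and use the fact that $\bar{F}^{t+1}$ ignores the odd coordinates together with stationarity of $\bar{\pi}$ to identify the two image laws as those of $\ddd{\Upsilon}^{t+1}$ and $\bar{\pi}$. Your extra step of coupling $(\Upsilon^{t}_{2},\Upsilon^{t}_{4})$ to equal $\Phi^{t}$ almost surely is harmless but unnecessary, since only the marginal law of $\ddd{\Upsilon}^{t+1}$ enters the total variation bound, which is exactly how the paper phrases it.
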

\begin{proof}
There exists a jointly distributed pair of random vectors $(\Psi,\Lambda)$ with
 $\Psi=\left(\Psi_{2},\Psi_{4}\right) \sim\Phi^{t}$
and $\Lambda=\left(\Lambda_{2},\Lambda_{4}\right)\sim\pi$
such that 
$
d_{TV}\left(\Phi^{t},\pi\right)=\mathbb{P}\left[\Psi\neq\Lambda\right] \,.
$
Then $\ddd{\Upsilon}^{t+1}\sim \bar{F}^{t+1}(1,\Psi_2,1,\Psi_4)$ and 
$\bar{F}^{t+1}(1,\Lambda_2,1,\Lambda_4)\sim \bar{\pi}$.
Hence
\begin{eqnarray*}
d_{TV}\left(\ddd{\Upsilon}^{t+1},\bar{\pi}\right)  & \leq & 
  \mathbb{P}\left[\bar{F}^{t+1}(1,\Psi_2,1,\Psi_4)\neq \bar{F}^{t+1}(1,\Lambda_2,1,\Lambda_4)\right]
  \\
 & \leq & \mathbb{P}\left[\Psi\neq\Lambda\right]\\
 & = & d_{TV}\left(\Phi^{t},\pi\right) \,.
\end{eqnarray*}
\end{proof}

We can now state our main results. Let $\mathcal{U}^{t}$
and $\mathcal{W}^{t}$ be two copies of the Markov chain (\ref{eq:mc2}) starting
at points $\mathcal{U}^{0}$ and $\mathcal{W}^{0}$ respectively, and let $d_{TV}$ denote the total variation metric. 
We define the condition 
\begin{equation}
a_{1}+a_{4}>1,a_{2}+a_{5}>1,a_{2}+a_{3}>1,a_{3}+a_{4}>1,a_{4}+a_{5}>1 \,. \label{eq:condition1}
\end{equation}
Let  $M:=\max_{i}\left\{ \mathcal{U}_{i}^{0},\mathcal{W}_{i}^{0}\right\} $, 
$m=\min_{i}\left\{ \mathcal{U}_{i}^{0},\mathcal{W}_{i}^{0}\right\} $ for $i\in {2,4}$ and define
$R_{0}=\frac{M}{m}$, $J_{0}=2m+1/\left(2m\right)$. The constants $\eta$, $d$ and $r$  appearing in the statement of Theorem \ref{thm:thm1} are defined in Appendix A, and depend only on the parameters $x,b,a_{1},\ldots,a_{5}$. 

\begin{thm}
\label{thm:thm1} Assume that (\ref{eq:condition1}) holds, and fix $\mathcal{U}^0$ and 
$\mathcal{W}^0$. 
If $J_{0}\leq\eta$, then for $t>0$,
\[
   d_{TV}\left(\mathcal{U}^{t+3},\mathcal{W}^{t+3}\right) \;  \leq \; 3\, r^{t/2d}
  \left(a_{2}+a_{3}+a_{4}+a_{5}\right)\left(R_{0}-1\right) \,.
\]
For general values of $J_{0}$, we have
\[
d_{TV}\left(\mathcal{U}^{t+3},\mathcal{W}^{t+3}\right) \;\leq \; 3\,r^{t/4d}\left(a_{2}+a_{3}+a_{4}+a_{5}\right)\left(R_{0}-1\right)+\frac{\max\left\{ J_{0},\eta\right\} }{\eta}\beta^{\left\lfloor \frac{t}{2}\right\rfloor +3} .
\]
\end{thm}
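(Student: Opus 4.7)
The plan is to use a synchronous coupling driven by common Gamma variables $\gamma_i^t$, reduce the total variation bound to the analysis of coordinate ratios between two stochastically ordered copies of the chain, and then convert the ratio bound into a TV bound via an explicit estimate for Gamma distributions. Direct inspection of $F^t(u_2,u_4)$ shows that both components are increasing in each of $u_2$ and $u_4$ for fixed $\gamma_i^t$, so the synchronous coupling preserves componentwise ordering. I would first sandwich $\mathcal{U}^t$ and $\mathcal{W}^t$ between two coupled chains $\mathcal{L}^t,\mathcal{H}^t$ driven by the same $\gamma$'s and started from the componentwise minimum and maximum of $\mathcal{U}^0,\mathcal{W}^0$; by monotonicity and the triangle inequality, $d_{TV}(\mathcal{U}^{t+3},\mathcal{W}^{t+3}) \leq 3\, d_{TV}(\mathcal{L}^{t+3},\mathcal{H}^{t+3})$, which accounts for the factor of $3$ in the stated bounds.

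Next, I would introduce the ratio process $R^t := \max\{\mathcal{H}_2^t/\mathcal{L}_2^t,\,\mathcal{H}_4^t/\mathcal{L}_4^t\}$, so that $R^0 \leq R_0$. Writing out the coupled recursion shows that $\mathcal{H}_2^{t+1}/\mathcal{L}_2^{t+1}$ has the form $(\alpha+\gamma)/(\alpha'+\gamma')$, with constituent ratios $\alpha/\alpha' = (x+\mathcal{H}_2^t)/(x+\mathcal{L}_2^t)$ (strictly less than $\mathcal{H}_2^t/\mathcal{L}_2^t$ thanks to $x>0$) and $\gamma/\gamma' = (\mathcal{H}_2^t+\mathcal{H}_4^t)/(\mathcal{L}_2^t+\mathcal{L}_4^t) \leq R^t$; the analogous analysis with $b$ in place of $x$ applies to the fourth-coordinate ratio. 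To upgrade this per-step algebraic contraction into a quantitative geometric rate, the auxiliary stochastic processes promised in the abstract would be introduced to dominate the evolution of $R^t$ by a simpler process whose multiplicative drift is easier to control, yielding an estimate of the form $\mathbb{E}[R^{t+d}-1 \mid R^t,\ J^t\leq\eta] \leq r\,(R^t-1)$ for some $r<1$ and integer lag $d$, valid as long as the chain remains inside the "good region" $\{J^t\leq\eta\}$.

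Conditional on $(\mathcal{L}^{t+2},\mathcal{H}^{t+2})$, the next Gibbs step produces $(\mathcal{L}_i^{t+3},\mathcal{H}_i^{t+3})$ for $i=2,4$ as independent Gamma variables with common shape $a_2+a_3$ (respectively $a_4+a_5$) and rate parameters that differ by a factor close to $1$ when $R^{t+2}$ is close to $1$. A standard inequality of the form $d_{TV}(\Gamma(a,\beta),\Gamma(a,\beta')) \leq C a\,|\beta/\beta' - 1|$, applied in each coordinate separately and summed, then yields a bound $d_{TV}(\mathcal{L}^{t+3},\mathcal{H}^{t+3}) \leq C(a_2+a_3+a_4+a_5)\,\mathbb{E}[R^{t+2}-1]$. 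Combining this with the ratio contraction delivers the first (clean) bound of the theorem in the case $J_0 \leq \eta$.

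For general $J_0$, a separate drift argument on the scalar statistic $J^t = 2m^t + 1/(2m^t)$ provides a geometric tail bound on the probability that the chain has not yet entered the good region by time $\lfloor t/2\rfloor$; this tail probability is precisely the additive term $(\max\{J_0,\eta\}/\eta)\,\beta^{\lfloor t/2\rfloor+3}$. On the complementary event only the second half of the steps are guaranteed to be contractive, which accounts for the slower exponent $t/(4d)$ in the multiplicative term in place of the $t/(2d)$ of the first case. The main obstacle I expect is quantifying the ratio contraction: although the per-step algebraic contraction is clear, controlling its expectation requires delicate estimates on random expressions mixing ratios of Gamma variables with the chain's current state (and almost certainly uses condition (\ref{eq:condition1}) to ensure suitable inverse moments are finite). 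Constructing tractable auxiliary dominators that are sharp enough to yield an effective rate $r<1$ and a usable lag $d$ appears to be where most of the technical work of the paper is concentrated.
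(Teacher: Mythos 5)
Your outline matches the paper's broad strategy (synchronous coupling of sandwich chains started from the componentwise min and max, a non-increasing max-ratio process, a final-step Gamma TV estimate of the form $1-\rho^{-a}\leq a(\rho-1)$, and a drift/tail split for general $J_0$ giving the $\beta^{\lfloor t/2\rfloor+3}$ term and the halved exponent), but there is a genuine gap at the very first reduction. The claimed inequality $d_{TV}(\mathcal{U}^{t+3},\mathcal{W}^{t+3})\leq 3\,d_{TV}(\mathcal{L}^{t+3},\mathcal{H}^{t+3})$ ``by monotonicity and the triangle inequality'' does not hold: the triangle inequality would require $d_{TV}(\mathcal{U}^{t+3},\mathcal{L}^{t+3})\leq d_{TV}(\mathcal{L}^{t+3},\mathcal{H}^{t+3})$, and pointwise sandwiching $\mathcal{L}\preceq\mathcal{U}\preceq\mathcal{H}$ gives no such bound, since total variation is not monotone under stochastic ordering (two laws squeezed between close laws can still be mutually singular). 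Note also that under the purely synchronous coupling $\mathbb{P}[\mathcal{L}^t\neq\mathcal{H}^t]=1$ for every finite $t$, so some exact-coalescence device is indispensable. The paper resolves this by a simultaneous \emph{one-shot coupling} of all four chains at time $t+3$: Lemma \ref{lem:Lemma 1} (for Gamma densities with common shape and ordered rates, $\min\{f_1,f_4\}\leq\min\{f_2,f_3\}$) guarantees that whenever the outer chains coalesce at the one-shot step so do the inner ones, whence Corollary \ref{cor:corollary2} gives $d_{TV}(\mathcal{U}^{t+3},\mathcal{W}^{t+3})\leq\mathbb{P}[u^{[t+3]C}\neq w^{[t+3]C}]$ with factor $1$. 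Relatedly, your attribution of the factor $3$ to this reduction is incorrect: in the paper it arises in Corollary \ref{cor:corollary10}, from bounding $r^{\lceil(k+1)/2\rceil}+\sum_{j\leq k}\binom{t}{j}\beta^{t-j}\leq 3r^{t/2d}$ with $k=\lfloor t/d\rfloor$.

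The second shortfall is that the theorem is a quantitative statement in the specific constants $\eta$, $r$, $d$, $\beta$ of Appendix A, and everything that produces them — the drift conditions for $K_{1,t},K_{2,t}$, the dominating process $D_t$ with $\rrr{r}_t\leq 1-1/D_t$, the two-step contraction $\mathbb{E}[R_{S+2}-1]\leq r\,\mathbb{E}[R_S-1]$ at good times, and the counting of good times (Lemmas \ref{lem:lemma8}--\ref{lem:lemma6} and Corollary \ref{cor:corollary10}) that converts ``contraction at good times'' into the exponent $t/2d$ — is deferred in your proposal to auxiliary processes ``to be introduced.'' You correctly identify this as where the work lies, but as written the proposal establishes neither the exact rate $3r^{t/2d}(a_2+a_3+a_4+a_5)(R_0-1)$ nor the mechanism by which a bound on the sandwich chains transfers to $\mathcal{U},\mathcal{W}$; both pieces are needed before the argument closes.
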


\noindent
We explain our need for condition (\ref{eq:condition1}) in section \ref{sub:ExistenceDt}. 
Taking $\mathcal{W}^{0}{\sim}\pi$ leads to the following.
\begin{cor}
\label{cor:cor2} Assume that (\ref{eq:condition1}) holds, and fix $\mathcal{U}^0$. For $t>0$,
\[ 
  d_{TV}\left(\mathcal{U}^{t+3},\pi\right) \;\leq\; 3\, r^{t/4d}\left(a_{2}+a_{3}+a_{4}+a_{5}\right)
 \mathbb{E}_{\pi}\left[R_{0}-1\right] +\left(\frac{\mathbb{E}_{\pi}\left[J_{0}\right]}{\eta}+1\right)\beta^{\left\lfloor \frac{t}{2}\right\rfloor +3} .
\]
\end{cor}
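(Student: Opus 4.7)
The plan is to take $\mathcal{W}^0\sim\pi$, independent of the fixed $\mathcal{U}^0$, so that by invariance of $\pi$ under (\ref{eq:mc2}) we have $\mathcal{W}^{t+3}\sim\pi$ for every $t\geq 0$. Theorem \ref{thm:thm1} then applies to the pair $(\mathcal{U}^{t+3},\mathcal{W}^{t+3})$ conditionally on $\mathcal{W}^0$. Concretely, for each realization of $\mathcal{W}^0$ there exists an optimal coupling of $\mathcal{U}^{t+3}$ and $\mathcal{W}^{t+3}$ whose disagreement probability equals their conditional total variation distance; randomizing $\mathcal{W}^0$ over $\pi$ splices these conditional couplings into a single joint distribution of $(\mathcal{U}^{t+3},\mathcal{W}^{t+3})$ whose first marginal is the law of $\mathcal{U}^{t+3}$ and whose second marginal is $\pi$. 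The defining inequality for $d_{TV}$ yields
$$d_{TV}\bigl(\mathcal{U}^{t+3},\pi\bigr)\;\leq\;\mathbb{P}\bigl(\mathcal{U}^{t+3}\neq\mathcal{W}^{t+3}\bigr)\;=\;\mathbb{E}_\pi\!\left[d_{TV}\bigl(\mathcal{U}^{t+3},\mathcal{W}^{t+3}\mid\mathcal{W}^0\bigr)\right].$$

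Substituting the general bound of Theorem \ref{thm:thm1}, with $R_0$ and $J_0$ now regarded as functions of the random $\mathcal{W}^0$, and invoking linearity of expectation gives
$$d_{TV}\bigl(\mathcal{U}^{t+3},\pi\bigr)\;\leq\;3\,r^{t/4d}(a_2+a_3+a_4+a_5)\,\mathbb{E}_\pi[R_0-1]\;+\;\frac{\mathbb{E}_\pi[\max\{J_0,\eta\}]}{\eta}\,\beta^{\lfloor t/2\rfloor+3}.$$
The elementary bound $\max\{J_0,\eta\}\leq J_0+\eta$ then collapses the second coefficient to $\mathbb{E}_\pi[J_0]/\eta+1$, matching the statement exactly.

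The argument is essentially routine bookkeeping once Theorem \ref{thm:thm1} is in hand; the only detail worth a moment of thought is $\pi$-integrability of $R_0-1$ and $J_0$, which reduces to $\mathbb{E}_\pi[1/\mathcal{W}^0_i]<\infty$ for $i\in\{2,4\}$. This follows from (\ref{eq:equilibrumdensity}) vanishing like $z^{a_i+a_{i+1}-1}$ at the origin together with the inequalities $a_2+a_3>1$ and $a_4+a_5>1$ built into condition (\ref{eq:condition1}). I anticipate no substantive obstacle beyond this.
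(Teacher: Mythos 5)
Your proposal is correct and follows essentially the same route as the paper: take $\mathcal{W}^0\sim\pi$ so that $\mathcal{W}^{t+3}\sim\pi$, apply the general bound of Theorem \ref{thm:thm1} conditionally on $\mathcal{W}^0$, average over $\pi$, and use $\max\{J_0,\eta\}\leq J_0+\eta$. The only cosmetic difference is that the paper re-uses the internal coupling inequality (\ref{eq:unotv2}) together with Corollary \ref{cor:corollary2} rather than averaging the theorem's statement via convexity of total variation, which is the same idea.
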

The quantities $\mathbb{E}_{\pi}\left[R_{0}\right]$ and $\mathbb{E}_{\pi}\left[J_{0}\right]$
depend only on $\pi$ and $\mathcal{U}^0$, and can be estimated with a bit of effort.
This is done in  Appendix B of \cite{key-18} for the case $\mathcal{U}^{0}=(1,1)$
(see also the end of Section \ref{sub:Sampling} in the present paper).

\subsection{Outline of our proof}
Essentially the proof of Theorem \ref{thm:thm1} relies on a coupling argument. In Section
\ref{sub:Stochastically-monotone-coupling} we consider a partial order
``$\preceq$'' on $\mathbb{R}_{+}^{2}$
and show that we can couple two copies $\left\{ u^{t},w^{t}\right\} $
of (\ref{eq:mc2}) with the initial condition $u^{0}\preceq w^{0}$
in a monotone manner, thus preserving the order $u^{t}\preceq w^{t}$,
up to a ``one-shot coupling'' time at which $u^t$ and $w^t$ try to coalesce
(succeeding with high probability, desirably).
In the beginning of Section \ref{sec:RatioRt}
we show that if $R_{t}$ is a process that serves as an upper bound
for the ratio $\underset{i}{\max}\left\{ \frac{w_{i}^{t}}{u_{i}^{t}}\right\} $,
then the rate of convergence of $R_{t}\rightarrow1$ can be related
to the rate at which (\ref{eq:mc2}) converges to equilibrium. Therefore,
our focus becomes the question of how to define such a process and show that
it converges to $1$ at a geometric rate. 
In Section \ref{sec:RatioRt} we define a 
stochastic process adapted to the same filtration as $u^{t}$,
with the property that it is an upper bound of (in the sense of $\preceq$) a 
copy of (\ref{eq:mc2}) started at $w^{0}$. This allows
us to define $R_{t}$ which has the additional quality of being strictly
monotone decreasing. This alone does not guarantee that $R_{t}\rightarrow1$
quickly (or at any pace, for that matter). But the rate at which $R_{t}$
approaches $1$ does depend on the size of the values $u_{2}^{t}$
and $u_{4}^{t}$, and we show that if often enough these two values
are neither too large nor too small, then $R_{t}\rightarrow1$ at
a geometric rate. To fulfill this condition, we postulate a number of
auxiliary processes in Section \ref{sub:Super-martingale} (and construct
them in Section \ref{sub:ExistenceDt}) that provide
upper bounds for the terms $\left\{ u_{2}^{t},u_{4}^{t},\frac{1}{u_{2}^{t}},\frac{1}{u_{4}^{t}}\right\} $,
and we show that they are frequently less than a fixed constant $\eta$. 
\medskip{}

\section{\label{sub:Stochastically-monotone-coupling}Monotone
coupling and one-shot coupling}

For $u=\left(u_{2},u_{4}\right)\in\mathbb{R}_{+}^{2}$ and 
$w=\left(w_{2},w_{4}\right)\in\mathbb{R}_{+}^{2}$, define the partial order
$u\preceq w$ to mean $u_2\leq w_2$ and $u_4\leq w_4$.
Given two initial points $u^0$ and $w^0$, we can produce two versions of the Markov chain
(\ref{eq:mc2}) in $\mathbb{R}^2_+$ using $u^{t+1}=F^{t+1}(u^t)$ and $w^{t+1}=F^{t+1}(w^t)$
(crucially, we use the same random variables $\left\{ \gamma_{i}^{t}\right\} $
in both versions).  We refer to this as the ``uniform coupling.''
This coupling is monotone, in the sense that if $u^0\preceq w^0$ 
then $u^{t}\preceq w^{t}$ for all times $t$. Suppose
we couple two copies in this manner  commencing at arbitrary
initial points $\mathcal{U}^{0},\mathcal{W}^{0}\in\mathbb{R}_{+}^{2}$.   Then
we can take $m=\min\left\{ \mathcal{U}_{2}^{0},\mathcal{U}_{4}^{0},\mathcal{W}_{2}^{0},
\mathcal{W}_{4}^{0}\right\}$
and $M=\max\left\{ \mathcal{U}_{2}^{0},\mathcal{U}_{4}^{0},\mathcal{W}_{2}^{0},\mathcal{W}_{4}^{0}\right\}$,
and define 
\begin{equation}
   w^{0}:=(M,M)\in\mathbb{R}_{+}^{2} \hspace{5mm}  \hbox{and}  \hspace{5mm}
  u^{0}:=(m,m)\in\mathbb{R}_{+}^{2} \,.
\label{eq:setvu}
\end{equation}
Observing that $u^{0}\preceq\{\mathcal{U}^{0},\mathcal{W}^{0}\}\preceq w^{0}$,
we conclude that $\mathcal{U}^{t}$ and $\mathcal{W}^{t}$ are perpetually
``squeezed'' between $u^{t}$ and $w^{t}$ (i.e.,
$u^{t}\preceq\{\mathcal{U}^{t},\mathcal{W}^{t}\}\preceq w^{t}$ for all $t$).
Corollary \ref{cor:corollary2} below justifies why it suffices
to consider the coupled pair $\left(u^{t},w^{t}\right)$ in order
to bound $d_{TV}\left(\mathcal{U}^{t},\mathcal{W}^{t}\right)$.

\begin{lem}
\label{lem:Lemma 1}Suppose that $0<\beta_{1}<\beta_{2}<\beta_{3}<\beta_{4}$ and $\alpha>0$.
Let $f_i$ be the density function of  $Z_i\sim \Gamma\left(\alpha,\beta_{i}\right)$. Then 
\[
   \min\left\{ f_{1}(y),f_{4}(y)\right\} \leq \min\left\{ f_{2}(y),f_{3}(y)\right\} \hspace{5mm}
    \hbox{for all $y\geq 0$}.
\]
\end{lem}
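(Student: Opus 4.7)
The plan is to reduce the claim to a statement about the $\beta$-dependence of the Gamma density with $y$ held fixed, and then exploit unimodality. Specifically, writing
\[
   f_i(y) \;=\; \frac{y^{\alpha-1}}{\Gamma(\alpha)}\,\phi_y(\beta_i), \qquad
   \phi_y(\beta)\,:=\,\beta^\alpha e^{-\beta y},
\]
the common prefactor $y^{\alpha-1}/\Gamma(\alpha)$ is nonnegative (and independent of $i$), so the inequality $\min\{f_1(y),f_4(y)\}\leq\min\{f_2(y),f_3(y)\}$ will follow immediately from
\begin{equation*}
   \min\{\phi_y(\beta_1),\phi_y(\beta_4)\}\;\leq\;\min\{\phi_y(\beta_2),\phi_y(\beta_3)\}.
\end{equation*}
(The edge case $y=0$ with $\alpha\leq 1$ is handled by noting that all four densities share the same $y^{\alpha-1}$ factor, so the comparison still reduces to $\phi_y$.)

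Next I would establish that $\phi_y$ is unimodal on $(0,\infty)$. A direct computation gives
\[
  \phi_y'(\beta)\;=\;\beta^{\alpha-1}e^{-\beta y}(\alpha-\beta y),
\]
so for $y>0$ the function $\phi_y$ is strictly increasing on $(0,\alpha/y]$ and strictly decreasing on $[\alpha/y,\infty)$, with a unique maximum at $\beta^*=\alpha/y$; for $y=0$, $\phi_0(\beta)=\beta^\alpha$ is strictly increasing.

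I would then finish by splitting into three cases according to the location of the mode $\beta^*$ relative to the middle pair $\beta_2,\beta_3$. If $\beta^*\leq\beta_2$, then $\phi_y$ is decreasing on $[\beta_2,\infty)$, whence $\phi_y(\beta_4)\leq\phi_y(\beta_3)\leq\phi_y(\beta_2)$ and in particular $\min\{\phi_y(\beta_1),\phi_y(\beta_4)\}\leq\phi_y(\beta_4)$ bounds both $\phi_y(\beta_2)$ and $\phi_y(\beta_3)$ from below. The case $\beta^*\geq\beta_3$ is symmetric via $\phi_y(\beta_1)\leq\phi_y(\beta_2)\leq\phi_y(\beta_3)$. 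If $\beta_2<\beta^*<\beta_3$, then monotonicity on each side of the mode gives $\phi_y(\beta_1)\leq\phi_y(\beta_2)$ and $\phi_y(\beta_4)\leq\phi_y(\beta_3)$, so $\min\{\phi_y(\beta_1),\phi_y(\beta_4)\}\leq\phi_y(\beta_1)\leq\phi_y(\beta_2)$ and $\min\{\phi_y(\beta_1),\phi_y(\beta_4)\}\leq\phi_y(\beta_4)\leq\phi_y(\beta_3)$, which is what we need.

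There is no real obstacle here; the entire content is the unimodality of $\phi_y$ and a tidy case split. The only mild bookkeeping point is to handle $y=0$ and the possibility $\alpha\leq 1$ uniformly, which is taken care of by the fact that $y^{\alpha-1}/\Gamma(\alpha)$ appears identically in all four densities.
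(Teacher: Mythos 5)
Your proof is correct, and it takes a genuinely different route from the paper's. The paper works in the $y$ variable: it locates the crossing point $g(\beta_i,\beta_j)=\alpha\left(\ln\beta_i-\ln\beta_j\right)/(\beta_i-\beta_j)$ at which $f_i$ and $f_j$ exchange order, uses concavity of $\beta\mapsto\alpha\ln\beta$ (monotonicity of secant slopes) to order these crossing points, and then observes that $f_1\leq\min\{f_2,f_3\}$ on $[0,g(\beta_1,\beta_3)]$ while $f_4\leq\min\{f_2,f_3\}$ on $[g(\beta_4,\beta_2),\infty)$, two intervals whose union is all of $[0,\infty)$. You instead freeze $y$ and regard the density as a function of the rate parameter, $\phi_y(\beta)=\beta^{\alpha}e^{-\beta y}$, which a one-line derivative computation shows is unimodal with mode at $\alpha/y$; for a unimodal function evaluated at four ordered points, the minimum over the outer pair is at most the minimum over the inner pair, which is exactly the claim after stripping the common factor $y^{\alpha-1}/\Gamma(\alpha)$. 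Your three-case split (mode left of $\beta_2$, right of $\beta_3$, or in between) is exhaustive and each case is argued correctly, and the $y=0$, $\alpha\leq 1$ boundary is handled properly via the common factor. What your approach buys is elementariness and generality: it avoids the secant-slope concavity lemma and the bookkeeping of crossing points, and it makes transparent that the conclusion holds for any family of the form $c(y)\phi_y(\beta_i)$ with $\phi_y$ unimodal in the parameter. The paper's approach yields, as a by-product, the explicit partition of $[0,\infty)$ into regions where $f_1$ (respectively $f_4$) is the smaller density, but that extra information is not needed for the lemma or for the total-variation remark that follows it.
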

\begin{rem} Since a property of total variation (see Proposition 3 of \cite{key-1}) is that
\[
d_{TV}\left(Z_{i},Z_{j}\right)\;=\;1-\int \min\left\{f_{i}\left(y\right),f_{j}\left(y\right)\right\}dy\,,
\]
we also conclude from 
Lemma \ref{lem:Lemma 1} that 
$d_{TV}\left(Z_{2},Z_{3}\right)\,\leq \,d_{TV}\left(Z_{1},Z_{4}\right)$.
\end{rem}
\begin{proof}[Proof of Lemma \ref{lem:Lemma 1}]
Note first that for $i,j\in\left\{ 1,2,3,4\right\}$ with $i<j$, 
\begin{equation}
   f_{i}(y) \; \geq \; f_{j}(y) 
   \;\; \Longleftrightarrow \;\; 
    \beta_{i}^{\alpha}\exp(-\beta_{i}y) \; \geq\; \beta_{j}^{\alpha}\exp(-\beta_{j}y)
    \;\; \Longleftrightarrow \;\; 
    y \;\geq \; g(\beta_i,\beta_j) \,
     \label{eq:lemma121eq1}
\end{equation}
where
\[
g(\beta_i,\beta_j):=\frac{\alpha\left(\ln(\beta_i)-\ln(\beta_j)\right)}{\beta_i-\beta_j} \,.
\]
Observe that $g(\beta_i,\beta_j)$ is the slope of the secant line joining the points
$(\beta_i,z_i)$ and $(\beta_j,z_j)$ on the curve $z=\alpha\ln\beta$.  Since this curve is 
concave, the slope is decreasing in $\beta_i$ and $\beta_j$ for $\beta_i<\beta_j$
(Lemma 5.16 of \cite{key-Roy}), which implies
\begin{equation}
  g(\beta_{4},\beta_{3}) \,\leq\, g(\beta_{4},\beta_{2}) \,\leq\, g(\beta_{4},\beta_{1})\,=\,
   g(\beta_{1},\beta_{4}) \,\leq\, g(\beta_{1},\beta_{3})\,\leq\, g(\beta_{1},\beta_{2}) \,.
  \label{eq:lemma121eq2}
\end{equation}
Then from (\ref{eq:lemma121eq1}) and (\ref{eq:lemma121eq2}) it follows
that 
\begin{eqnarray*}
f_{1}(y) & \leq & \min\left\{ f_{2}(y),f_{3}(y)\right\} \: \hbox{ on }\:[0,g(\beta_{1},\beta_{3})] \;
    \hbox{ and }\\
f_{4}(y) & \leq & \min\left\{ f_{2}(y),f_{3}(y)\right\} \: \hbox{ on }\:[g(\beta_{4},\beta_{2}),\infty) \,;
\end{eqnarray*}
hence 
$
 \min\left\{f_{1}(y),f_{4}(y)\right\}\leq \min\left\{ f_{2}(y),f_{3}(y)\right\} \: \hbox{ on }
   \:[0,g(\beta_{1},\beta_{3})]\cup[g(\beta_{4},\beta_{2}),\infty)=[0,\infty) \,.
$
\end{proof}

We now describe ``one-shot coupling'' of the Markov chains $u$, $w$, $\mathcal{U}$, and
$\mathcal{W}$ at time $t{+}1$ (described in \cite{key-10} in greater generality).
Assume that the uniform coupling of these chains hold up to and including time $t$.
The two random variables $\gamma_{1}^{t+1}$ and $\gamma_{3}^{t+1}$ will be used
for all four chains at time $t{+}1$.
%
For $i\in\{2,4\}$,
let $f_{u_i}$ be the probability density function of the conditional distribution of $u_i^{t+1}$
given $u^t$ and $\left(\gamma_{1}^{t+1},\gamma_{3}^{t+1}\right)$,  with analogous definitions for
$f_{w_i}$, $f_{\mathcal{U}_{i}}$, and $f_{\mathcal{W}_{i}}$.
For each coordinate $i\in\{2,4\}$, we take $u_{i}^{[t+1]C}$
to be the $x$-coordinate of a uniformly chosen point from the area under
the graph of the density function $f_{u_{i}}$. 
(The superscript $[t+1]C$ denotes that the coupling occurs at time $t+1$.)
If this point also lies below the graph of the density function $f_{w_{i}}$,
then set $w_{i}^{[t+1]C}=\mathcal{W}_{i}^{[t+1]C}=\mathcal{U}_{i}^{[t+1]C}=u_{i}^{[t+1]C}$.
Otherwise, let $w_{i}^{[t+1]C}$
be the $x$-coordinate of a uniformly and independently chosen point
from the area above the graph of $\min\left\{ f_{u_{i}},f_{w_{i}}\right\} $
and below the graph of $f_{w_{i}}$ (in this case, $w^{[t+1]C}\neq u^{[t+1]C}$ because
$f_{u_i}(u^{[t+1]C})>f_{w_i}(u^{[t+1]C})$),  
let $\mathcal{W}_{i}^{[t+1]C}$
be the $x$-coordinate of a uniformly and independently chosen point
from the area above the graph of $\min\left\{ f_{u_{i}},f_{v_{i}}\right\} $
and below the graph of $f_{\mathcal{W}_{i}}$, and let  $\mathcal{U}_{i}^{[t+1]C}$
be the $x$-coordinate of a uniformly and independently chosen point
from the area above the graph of $\min\left\{ f_{u_{i}},f_{w_{i}}\right\} $
and below the graph of $f_{\mathcal{U}_{i}}$. By Lemma
\ref{lem:Lemma 1} we know $\min\left\{ f_{u_{i}},f_{w_{i}}\right\} \leq \min\left\{ f_{\mathcal{W}_{i}},f_{\mathcal{U}_{i}}\right\} $, hence it is easy to verify that 
$(\mathcal{U}^{[t+1]C},\mathcal{W}^{[t+1]C},u^{[t+1]C},w^{[t+1]C})$
is indeed a coupling of $\left(\mathcal{U}^{t+1},\mathcal{W}^{t+1},u^{t+1},w^{t+1}\right)$.   
(Observe that the relations $u^{[t+1]C} \preceq \{\mathcal{U}^{[t+1]C},\mathcal{W}^{[t+1]C}\}
\preceq w^{[t+1]C}$ may not hold.)
\begin{cor}
\label{cor:corollary2} For one-shot coupling at time $t+1$, we have 
\[
    d_{TV}\left(\mathcal{U}^{t+1},\mathcal{W}^{t+1}\right)\;\leq\;
   \mathbb{P}\left[u^{[t+1]C}\neq w^{[t+1]C}\right]  \,.
\]
\end{cor}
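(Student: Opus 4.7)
The plan is to verify that the described construction is indeed a valid coupling of $(\mathcal{U}^{t+1},\mathcal{W}^{t+1},u^{t+1},w^{t+1})$, and then to track the pointwise set inclusion $\{u^{[t+1]C}=w^{[t+1]C}\}\subseteq \{\mathcal{U}^{[t+1]C}=\mathcal{W}^{[t+1]C}\}$, from which the bound will follow from the definition of $d_{TV}$.

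First I would verify that, conditional on $u^t$ and $(\gamma_1^{t+1},\gamma_3^{t+1})$, each of $u_i^{[t+1]C}$, $w_i^{[t+1]C}$, $\mathcal{U}_i^{[t+1]C}$, $\mathcal{W}_i^{[t+1]C}$ has the advertised density $f_{u_i}$, $f_{w_i}$, $f_{\mathcal{U}_i}$, $f_{\mathcal{W}_i}$ respectively. The nontrivial point is that ``the area above $\min\{f_{u_i},f_{w_i}\}$ and below $f_{\mathcal{W}_i}$'' only makes sense as a valid region if $\min\{f_{u_i},f_{w_i}\}\leq f_{\mathcal{W}_i}$ pointwise, and similarly for $f_{\mathcal{U}_i}$. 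Under the uniform coupling up to time $t$ we have $u^t\preceq\{\mathcal{U}^t,\mathcal{W}^t\}\preceq w^t$, so at each coordinate $i\in\{2,4\}$ the four relevant conditional densities are Gamma densities of a common shape parameter whose inverse scale parameters are ordered so that those coming from $u^t$ and $w^t$ sandwich those coming from $\mathcal{U}^t$ and $\mathcal{W}^t$. Lemma \ref{lem:Lemma 1} then yields exactly the needed pointwise inequality $\min\{f_{u_i},f_{w_i}\}\leq \min\{f_{\mathcal{U}_i},f_{\mathcal{W}_i}\}$, and a short computation confirms that splitting the total mass of $f_{\mathcal{W}_i}$ as $\min\{f_{u_i},f_{w_i}\}+(f_{\mathcal{W}_i}-\min\{f_{u_i},f_{w_i}\})$ yields the correct marginal for $\mathcal{W}_i^{[t+1]C}$ (and similarly for the other three).

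Once validity of the coupling is in hand, the definition of $d_{TV}$ as an infimum over couplings with the prescribed marginals immediately gives
\[
d_{TV}(\mathcal{U}^{t+1},\mathcal{W}^{t+1})\;\leq\;\mathbb{P}\bigl[\mathcal{U}^{[t+1]C}\neq \mathcal{W}^{[t+1]C}\bigr].
\]
It then suffices to establish the event inclusion $\{u^{[t+1]C}=w^{[t+1]C}\}\subseteq \{\mathcal{U}^{[t+1]C}=\mathcal{W}^{[t+1]C}\}$. For each coordinate $i\in\{2,4\}$, the event $\{u_i^{[t+1]C}=w_i^{[t+1]C}\}$ is exactly the event that the uniformly sampled point under $f_{u_i}$ also lies under $f_{w_i}$; but the construction explicitly sets $\mathcal{U}_i^{[t+1]C}=\mathcal{W}_i^{[t+1]C}=u_i^{[t+1]C}$ on precisely this event. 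Intersecting over $i\in\{2,4\}$ and taking complements gives $\mathbb{P}[\mathcal{U}^{[t+1]C}\neq\mathcal{W}^{[t+1]C}]\leq\mathbb{P}[u^{[t+1]C}\neq w^{[t+1]C}]$, and combining this with the previous display completes the proof.

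The main obstacle, such as it is, is purely bookkeeping: confirming that Lemma \ref{lem:Lemma 1} applies cleanly to the four coordinate-wise Gamma densities produced by the uniform coupling, and checking that the four marginal density computations really do return $f_{u_i}$, $f_{w_i}$, $f_{\mathcal{U}_i}$, $f_{\mathcal{W}_i}$. Once that verification is made, the corollary is essentially a direct unwinding of the definitions of one-shot coupling and of $d_{TV}$.
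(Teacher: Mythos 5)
Your proposal is correct and follows essentially the same route as the paper: it checks (via Lemma \ref{lem:Lemma 1}) that the one-shot construction is a genuine coupling, then combines the coupling characterization of $d_{TV}$ with the coordinatewise event inclusion $\{\mathcal{U}_i^{[t+1]C}\neq\mathcal{W}_i^{[t+1]C}\}\subseteq\{u_i^{[t+1]C}\neq w_i^{[t+1]C}\}$, exactly as in the paper's two-step inequality. The only difference is that you spell out the coupling-validity verification inside the proof, whereas the paper handles it in the prose preceding the corollary.
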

\begin{proof}
By the coupling construction, 
$\left\{ \mathcal{U}_{i}^{[t+1]C}\neq\mathcal{W}_{i}^{[t+1]C}\right\} \subseteq\left\{ u_{i}^{[t+1]C}
\neq w_{i}^{[t+1]C}\right\}$ for $i=2,4$.   Therefore
\[   
   d_{TV}\left(\mathcal{U}^{t+1},\mathcal{W}^{t+1}\right) \;\; \leq \;\; 
  \mathbb{P}\left[\mathcal{U}^{[t+1]C}\neq\mathcal{W}^{[t+1]C}\right]  \;\;\leq \;\;
  \mathbb{P}\left[{u}^{[t+1]C}\neq {w}^{[t+1]C}\right] \,.  
\]
\end{proof}

\section{\label{sec:RatioRt}The ratio $R_{t}$} 

We assume in this section that the chains $u$ and $w$ are constructed by the uniform
coupling with $u^0\preceq w^0$ (which holds by (\ref{eq:setvu})), so that 
$u^{t}\preceq w^{t}$ and  $w_{i}^{t}/{u_{i}^{t}}\geq 1$  for all $t\geq 0$ and $i\in\{2,4\}$. 
Define the filtration
$\mathscr{F}_{t}:=\sigma\left(u^{0},w^{0},\gamma_{1}^{1},\ldots\gamma_{4}^{1},\ldots,
\gamma_{1}^{t},\ldots,\gamma_{4}^{t}\right)$. Then the following coupling construction will be 
used to define the non-increasing $\mathscr{F}_{t}$-measurable process $R_{t}$, with the 
property $R_{t}\geq\underset{i}{\max}\left\{ \frac{w_{i}^{t}}{u_{i}^{t}}\right\}$. 
Note that $u^{t}=w^{t}$ if $R_{t}=1$.

Given $u^{0}\preceq w^{0}$, we shall define two auxiliary processes $\tilde{v}$ and $v$.
Let $v^0:=w^0$, so that $\frac{u_{2}^{0}}{u_{4}^{0}}=\frac{v_{2}^{0}}{v_{4}^{0}}$. 
Let $R_0:=\frac{v_2^0}{u_2^0}$ ($=\frac{v^0_4}{u^0_4}$).  
For each $t\geq 0$, we already have (recall (\ref{eq:mc2}))  
\[
u^{t+1} \;=\; \left(u_{2}^{t+1},u_{4}^{t+1}\right)\; :=\;  F^{t+1}(u^t_2,u^t_4)   \;=\;
\left(\frac{\gamma_{2}^{t+1}}{\frac{\gamma_{1}^{t+1}}{x+u_{2}^{t}}+\frac{\gamma_{3}^{t+1}}{u_{2}^{t}+u_{4}^{t}}},\, \frac{\gamma_{4}^{t+1}}{\frac{\gamma_{3}^{t+1}}{u_{2}^{t}+u_{4}^{t}}+b}\right) \,.
\]
For each $t\geq 0$, we recursively define
\[
\tilde{v}^{t+1} \;=\; \left(\tilde{v}_{2}^{t+1},\tilde{v}_{4}^{t+1}\right)  \; := \;  F^{t+1}(v^t_2,v^t_4)  \;=\;
\left(\frac{\gamma_{2}^{t+1}}{\frac{\gamma_{1}^{t+1}}{x+v_{2}^{t}}+\frac{\gamma_{3}^{t+1}}{v_{2}^{t}+v_{4}^{t}}},\, \frac{\gamma_{4}^{t+1}}{\frac{\gamma_{3}^{t+1}}{v_{2}^{t}+v_{4}^{t}}+b}\right) \,,
\]
\[
    R_{t+1} :=  \max\left\{ \frac{\tilde{v}_{2}^{t+1}}{u_{2}^{t+1}},\frac{\tilde{v}_{4}^{t+1}}{u_{4}^{t+1}}
       \right\}  \,,   \hspace{10mm}\hbox{and} 
\]
\begin{equation}
   v^{t+1}  \;=\;
\left(v_{2}^{t+1},v_{4}^{t+1}\right):= \left(R_{t+1}u_{2}^{t+1},R_{t+1} u_{4}^{t+1}\right)\,.\label{eq:ratiomethod}
\end{equation}
Note that unlike $u^{t}$, the process $v^{t}$ is not a Markov chain. Observe also that
equality of ratios is preserved: $\frac{u_{2}^{t+1}}{u_{4}^{t+1}}=\frac{v_{2}^{t+1}}{v_{4}^{t+1}}$, 
and $\frac{v_{2}^{t+1}}{u_{2}^{t+1}}=\frac{v_{4}^{t+1}}{u_{4}^{t+1}} = R_{t+1}$.

Recall that $w^0=v^0$ and $w^{t+1}=F^{t+1}(w_t)$ for $t\geq 0$.  Then by induction,
the monotonicity of 
$F$ guarantees that $u^t\preceq w^t \preceq \tilde{v}^t \preceq v^t$ for every $t$.  That is, 
the process $v^t$ dominates a copy of the Markov chain started at $w^{0}$
and coupled uniformly with $u^{t}$.

Before deriving properties of $R_t$, we state
the following elementary calculus lemma.

\begin{lem}
\label{lem:lemma4}
Suppose that $0<a<b$. Then $g(\mathtt{x},\mathsf{\mathtt{y}}):=
(\frac{\mathtt{x}}{b}+\mathtt{y}$)/$(\frac{\mathtt{x}}{a}+\mathtt{y}$)
is decreasing in $\mathtt{x}$ and increasing in $\mathtt{y}$, for
all $\mathtt{x},\mathtt{y}>0$.
\end{lem}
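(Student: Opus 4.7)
The plan is to derive both monotonicity claims from a single algebraic rewriting of $g$, rather than from partial derivatives. First I would compute
\[
  1 - g(\mathtt{x},\mathtt{y}) \;=\; \frac{(\mathtt{x}/a + \mathtt{y}) - (\mathtt{x}/b + \mathtt{y})}{\mathtt{x}/a + \mathtt{y}}
  \;=\; \frac{(b-a)\mathtt{x}}{b\mathtt{x} + ab\mathtt{y}} \,,
\]
using $0<a<b$ so that the numerator is strictly positive for $\mathtt{x}>0$. Monotonicity in $\mathtt{y}$ is then immediate: increasing $\mathtt{y}$ leaves the numerator fixed while strictly increasing the denominator $b\mathtt{x}+ab\mathtt{y}$, so $1-g$ decreases and $g$ increases. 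For the dependence on $\mathtt{x}$, I would divide numerator and denominator of the above fraction by $\mathtt{x}$ to get
\[
   1 - g(\mathtt{x},\mathtt{y}) \;=\; \frac{b-a}{b + ab\mathtt{y}/\mathtt{x}} \,,
\]
from which it follows that as $\mathtt{x}$ increases (with $\mathtt{y}$ fixed), the quantity $ab\mathtt{y}/\mathtt{x}$ decreases, the denominator decreases, $1-g$ increases, and hence $g$ decreases.

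A direct alternative is the quotient rule, which gives $\partial g/\partial \mathtt{x}$ proportional to $\mathtt{y}(1/b - 1/a)<0$ and $\partial g/\partial \mathtt{y}$ proportional to $\mathtt{x}(1/a - 1/b)>0$; each sign is a one-line calculation. There is no real obstacle in either route, since the claim is elementary; the only choice is stylistic. I would favour the rewriting above because the single identity for $1-g$ displays both monotonicities simultaneously and makes transparent why the direction reverses between the two variables.
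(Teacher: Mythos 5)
Your argument is correct. Note that the paper itself offers no proof of this lemma at all---it is stated as an ``elementary calculus lemma'' and the verification is left implicit---so there is nothing to diverge from. Your single identity
\[
1-g(\mathtt{x},\mathtt{y})=\frac{(b-a)\mathtt{x}}{b\mathtt{x}+ab\mathtt{y}}
\]
is algebraically right (both monotonicity claims do follow from it exactly as you say), and the quotient-rule alternative you sketch, with $\partial g/\partial\mathtt{x}$ having the sign of $\mathtt{y}(1/b-1/a)<0$ and $\partial g/\partial\mathtt{y}$ the sign of $\mathtt{x}(1/a-1/b)>0$, is equally valid; either version would serve as the omitted proof.
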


\noindent We can now show that $\{R_t\}$ is \textit{non-increasing}. Let 
\[
Q_{t} \;\; := \;\;  \max \left\{  \frac{\gamma_{3}^{t+1}+bu_{4}^{t}}{\gamma_{3}^{t+1}+bv_{4}^{t}}
\,, \,
\frac{\gamma_{3}^{t+1}+\frac{\gamma_{1}^{t+1}}{1+\frac{x}{u_{2}^{t}}}}{\gamma_{3}^{t+1}+\frac{\gamma_{1}^{t+1}}{1+\frac{x}{v_{2}^{t}}}}  \right\}.
\]
Then
\begin{lem}
\label{lem:lemmaRnonincreasing}
$R_{t+1}\leq Q_{t}R_{t}$ and $Q_{t} \leq 1$.
\end{lem}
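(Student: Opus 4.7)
The plan is to bound each coordinate ratio $\tilde v_i^{t+1}/u_i^{t+1}$ ($i=2,4$) separately by $R_t$ times the corresponding entry inside the $\max$ defining $Q_t$, and then show each such entry is at most $1$. Two structural facts drive everything: the identity $v_2^t+v_4^t=R_t(u_2^t+u_4^t)$ coming from (\ref{eq:ratiomethod}), together with $R_t\geq 1$, which gives $u_i^t\leq v_i^t$ for $i=2,4$.

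Writing $s=u_2^t+u_4^t$ and substituting the defining formulas, the fourth-coordinate ratio collapses to
\[
\frac{\tilde v_4^{t+1}}{u_4^{t+1}} \;=\; \frac{b+\gamma_3^{t+1}/s}{b+\gamma_3^{t+1}/(R_t s)} \;=\; R_t\cdot\frac{\gamma_3^{t+1}+bs}{\gamma_3^{t+1}+bR_t s}.
\]
After a straightforward rearrangement the final factor falls into the form of Lemma \ref{lem:lemma4} and is non-increasing in $s$ (with $R_t\geq 1$ providing the required ordering of the two parameters). Replacing $s$ by the smaller quantity $u_4^t$ then bounds this ratio by $R_t\,Q_t^{(4)}$, where I write $Q_t^{(4)}$ for the first entry inside the $\max$ of $Q_t$, and $Q_t^{(4)}\leq 1$ is immediate from $u_4^t\leq v_4^t$.

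The second coordinate is more delicate because $u_2^{t+1}$ depends on both $u_2^t$ and $u_4^t$. A parallel calculation, again using $v_2^t+v_4^t=R_ts$, gives
\[
\frac{\tilde v_2^{t+1}}{u_2^{t+1}} \;=\; R_t\cdot\frac{P\,(\gamma_1^{t+1}s+\gamma_3^{t+1}p)}{p\,(\gamma_1^{t+1}R_t s+\gamma_3^{t+1}P)},
\]
where $p=x+u_2^t$ and $P=x+v_2^t$. The unequal constants $\gamma_3^{t+1}p$ and $\gamma_3^{t+1}P$ prevent a direct application of Lemma \ref{lem:lemma4}, so I would differentiate in $s$ directly; the sign of the derivative reduces to that of $P-R_tp$. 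The inequality $P\leq R_tp$ is the linchpin and follows from $v_2^t=R_tu_2^t$ together with the outer shift by $x$, since $P=x+R_tu_2^t\leq R_tx+R_tu_2^t=R_tp$. Substituting $s=u_2^t$ and rewriting $\gamma_1^{t+1}u_2^t/p=\gamma_1^{t+1}/(1+x/u_2^t)$ collapses the bound to $R_tQ_t^{(2)}$, and $Q_t^{(2)}\leq 1$ follows directly from $u_2^t\leq v_2^t$. Taking the max of the two coordinate bounds yields $R_{t+1}\leq R_tQ_t$ and $Q_t\leq 1$. The only genuinely subtle point is the verification of $P\leq R_tp$: without the shift $x$ this would be an equality, the factor would be constant in $s$, and the per-step bound would degenerate to $R_{t+1}\leq R_t$ rather than the strict contraction needed for the convergence arguments of later sections.
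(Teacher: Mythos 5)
Your proof is correct and follows essentially the same route as the paper: factor out $R_t$ from each coordinate ratio (using $v_i^t=R_t u_i^t$, hence $v_2^t+v_4^t=R_t(u_2^t+u_4^t)$), exploit monotonicity of the remaining factor in $s=u_2^t+u_4^t$, and substitute the extremal value $u_4^t$ (resp.\ $u_2^t$) to recover the two entries of $Q_t$, with $Q_t\leq 1$ immediate from $u^t\preceq v^t$. The only cosmetic difference is in the second coordinate, where you differentiate in $s$ and use $P\leq R_t p$ (which is equivalent to $u_2^t\leq v_2^t$, i.e.\ $R_t\geq 1$), whereas the paper rewrites the same factor with the common $\gamma_3$-term as the variable so that Lemma \ref{lem:lemma4} applies directly.
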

\begin{proof}
Since $u^t\preceq v^t$, it is immediate that $Q_{t} \leq 1$.
And by Lemma \ref{lem:lemma4}, we have 
\begin{equation}
\begin{aligned}
  R_{t+1}  & =\; \max\left\{ \frac{\frac{\gamma_{3}^{t+1}}{u_{2}^{t}+u_{4}^{t}}+b}{\frac{\gamma_{3}^{t+1}}{v_{2}^{t}+v_{4}^{t}}+b}\; ,\, \frac{\frac{\gamma_{3}^{t+1}}{u_{2}^{t}+u_{4}^{t}}+\frac{\gamma_{1}^{t+1}}{u_{2}^{t}+x}}{\frac{\gamma_{3}^{t+1}}{v_{2}^{t}+v_{4}^{t}}+\frac{\gamma_{1}^{t+1}}{v_{2}^{t}+x}} \right\} \\
 & = \;\frac{v_{2}^{t}}{u_{2}^{t}}\cdot \max\left\{ \left(\frac{\frac{\gamma_{3}^{t+1}}{\frac{u_{2}^{t}}{u_{4}^{t}}+1}+bu_{4}^{t}}{\frac{\gamma_{3}^{t+1}}{\frac{u_{2}^{t}}{u_{4}^{t}}+1}+bv_{4}^{t}}\right),\left(\frac{\frac{\gamma_{3}^{t+1}}{\frac{u_{4}^{t}}{u_{2}^{t}}+1}+\frac{\gamma_{1}^{t+1}}{1+\frac{x}{u_{2}^{t}}}}{\frac{\gamma_{3}^{t+1}}{\frac{u_{4}^{t}}{u_{2}^{t}}+1}+\frac{\gamma_{1}^{t+1}}{1+\frac{x}{v_{2}^{t}}}}\right)\right\} \\
 & \leq \; R_t\,Q_{t}    \,.
\end{aligned}
\label{eq:ratiocalc}
\end{equation}
\end{proof}
\noindent Lemma \ref{lem:lemmaRnonincreasing} shows that the sequence $\{R_t\}$ 
is non-increasing when $u^0\leq v^0$, and 
\[   
   \mathbb{E}[R_{t+1}] \; \leq \;  R_{0}\mathbb{E}\left[\prod_{j=0}^{t}Q_{j}\right]  \,.
\]

\noindent The next lemma shows that $\mathbb{P}\left[u^{[t+1]C}\neq w^{[t+1]C}
\left|\mathscr{F}_{t}\right.\right]$ is small if $R_t$ is close to 1. 

\begin{lem}
\label{lem:lemma3}
Assume $u^0\preceq w^0$.  For one-shot coupling
at time $t+1$, we have 
\[
\mathbb{P}\left[ \left. u^{[t+1]C}\neq w^{[t+1]C}\right|\mathscr{F}_{t} \right] \; \leq \;
 1-R_{t}^{-(a_{2}+a_{3} +a_4+a_5)} .
\]
\end{lem}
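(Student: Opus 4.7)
The plan is to decompose the event by coordinates and exploit the structure of the one-shot coupling. Conditional on $\mathscr{F}_t$ and the pair $(\gamma_1^{t+1},\gamma_3^{t+1})$, the construction declares $u_i^{[t+1]C}=w_i^{[t+1]C}$ exactly when the point sampled uniformly from under $f_{u_i}$ also lies below $f_{w_i}$, so
\[
   \mathbb{P}\!\left[u_i^{[t+1]C}=w_i^{[t+1]C}\mid \mathscr{F}_t,\gamma_1^{t+1},\gamma_3^{t+1}\right]
   \;=\; \int_0^\infty \min\{f_{u_i}(y),f_{w_i}(y)\}\,dy.
\]
Moreover the coordinate $i=2$ and $i=4$ draws are conditionally independent (they use the independent variables $\gamma_2^{t+1},\gamma_4^{t+1}$ along with two independent uniform samples), so it suffices to bound each of the two integrals from below by $R_t^{-(a_i+a_{i+1})}$ and multiply.

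Both $f_{u_i}$ and $f_{w_i}$ are Gamma densities with common shape parameter $\alpha_i:=a_i+a_{i+1}$ and rate parameters $\beta_{u,i}\geq\beta_{w,i}$, where
\[
   \beta_{u,2}=\tfrac{\gamma_1^{t+1}}{x+u_2^t}+\tfrac{\gamma_3^{t+1}}{u_2^t+u_4^t},\qquad \beta_{u,4}=b+\tfrac{\gamma_3^{t+1}}{u_2^t+u_4^t},
\]
and analogously for $w$. I would then invoke the elementary pointwise inequality
\[
   \min\{f_{u_i}(y),f_{w_i}(y)\}\;\geq\;\frac{\beta_{w,i}^{\alpha_i}}{\Gamma(\alpha_i)}\,y^{\alpha_i-1}e^{-\beta_{u,i}y},
\]
which holds because the right-hand side is dominated by $f_{u_i}$ (using $\beta_{u,i}^{\alpha_i}\geq\beta_{w,i}^{\alpha_i}$) and by $f_{w_i}$ (using $e^{-\beta_{w,i}y}\geq e^{-\beta_{u,i}y}$). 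Integrating this lower bound over $y>0$ gives precisely $(\beta_{w,i}/\beta_{u,i})^{\alpha_i}$.

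The crux is now to verify $\beta_{w,i}\geq R_t^{-1}\beta_{u,i}$ for each $i$, and this is where the dominating process $v^t$ earns its keep. Because $w^t\preceq v^t=R_tu^t$, we have $w_2^t+w_4^t\leq R_t(u_2^t+u_4^t)$, hence $\gamma_3^{t+1}/(w_2^t+w_4^t)\geq R_t^{-1}\gamma_3^{t+1}/(u_2^t+u_4^t)$, and also $b\geq R_t^{-1}b$ since $R_t\geq 1$; adding these gives $\beta_{w,4}\geq R_t^{-1}\beta_{u,4}$. For $i=2$ the same bound handles the second summand, while $x+w_2^t\leq x+R_tu_2^t\leq R_t(x+u_2^t)$ gives $\gamma_1^{t+1}/(x+w_2^t)\geq R_t^{-1}\gamma_1^{t+1}/(x+u_2^t)$, so $\beta_{w,2}\geq R_t^{-1}\beta_{u,2}$.

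Combining these estimates coordinate by coordinate, using conditional independence, and averaging over $(\gamma_1^{t+1},\gamma_3^{t+1})$ (noting that $R_t$ is $\mathscr{F}_t$-measurable and that the bound $R_t^{-(a_2+a_3+a_4+a_5)}$ does not depend on $\gamma_1^{t+1},\gamma_3^{t+1}$) yields the lemma. I do not foresee any substantive obstacle; the only subtle point is to apply the factor $R_t^{-1}$ term by term inside each $\beta_{w,i}$ rather than through a coarser single-ratio argument, which would lose a factor and spoil the clean exponent $a_2+a_3+a_4+a_5$.
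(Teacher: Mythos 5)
Your proposal is correct and takes essentially the same route as the paper: identify the two conditional densities as gamma with common shape $a_i+a_{i+1}$, use the pointwise bound $\min\{f_{u_i},f_{w_i}\}\geq(\Delta_{w,i}/\Delta_{u,i})^{a_i+a_{i+1}}f_{u_i}$ whose integral is the ratio of rates to the power of the shape, exploit conditional independence of the two coordinates given $\gamma_1^{t+1},\gamma_3^{t+1}$, and bound the rate ratio by $R_t^{-1}$. The only minor difference is in that last step: you verify $\Delta_{w,i}\geq R_t^{-1}\Delta_{u,i}$ term by term from $w^t\preceq v^t=R_t u^t$ and $R_t\geq 1$, whereas the paper reads it off from the identity $\Delta_{u,i}^{t+1}/\Delta_{w,i}^{t+1}=w_i^{t+1}/u_i^{t+1}\leq v_i^{t+1}/u_i^{t+1}=R_{t+1}\leq R_t$; both arguments are valid and give the same exponent.
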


\begin{proof}
For $i\in\left\{2,4\right\}$ and $\mathscr{G}_{t}:=\sigma\left(\mathscr{F}_{t},\gamma_{1}^{t+1},\gamma_{3}^{t+1}\right)$, let $f_{u_{i}}(y)$ and $f_{w_{i}}(y)$
be the conditional density functions of $u_{i}^{t+1}$ and $w_{i}^{t+1}$
given $\mathscr{G}_{t}$, as in our description of one-shot coupling. By (\ref{eq:mc2}), these 
are gamma densities with shape parameters $a_{i}+a_{i+1}$, and inverse scale parameters 
$\Delta_{u,2}^{t+1}:=\frac{\gamma_{1}^{t+1}}{x+u_{2}^{t}}+\frac{\gamma_{3}^{t+1}}{u_{2}^{t}+u_{4}^{t}}$ and $\Delta_{u,4}^{t+1}:=b+\frac{\gamma_{3}^{t+1}}{u_{2}^{t}+u_{4}^{t}}$, with 
$\Delta_{w,2}^{t+1}$ and $\Delta_{w,4}^{t+1}$ defined similarly. 
Observe that $\Delta_{u,i}^{t+1}\geq \Delta_{w,i}^{t+1}$.  Then for all $y>0$,
\[
f_{w_{i}}(y) \; \geq \; \left(\frac{\Delta_{w,i}^{t+1}}{\Delta_{u,i}^{t+1}}\right)^{a_{i}+a_{i+1}}
  f_{u_{i}}(y)
\]
and therefore
\[
\min\left\{ f_{u_{i}}(y),f_{w_{i}}(y)\right\} \; \geq\; \left(\frac{\Delta_{w,i}^{t+1}}{\Delta_{u,i}^{t+1}}\right)^{a_{i}+a_{i+1}} f_{u_{i}}(y) \,.
\]
For the uniform coupling we have $u_i^{t+1}=\gamma_i^{t+1}/\Delta_{u,i}^{t+1}$ and
$w_i^{t+1}=\gamma_i^{t+1}/\Delta_{w,i}^{t+1}$, and hence
\[    \frac{\Delta_{u,i}^{t+1}}{\Delta_{w,i}^{t+1}}   \;=\;
    \frac{w_i^{t+1}}{u_i^{t+1}}    \;\leq \;  \frac{v_i^{t+1}}{u_i^{t+1}}   \;=\; R_{t+1}  \;\leq R_t \,.
\]
By our construction of the one-shot coupling, 
\begin{eqnarray*}
\mathbb{P}\left[ \left. u_{i}^{[t+1]C}\neq w_{i}^{[t+1]C}\right|\mathscr{G}_{t}\right] & = &
   1-\int \min\left\{ f_{u_{i}}(y),f_{w_{i}}(y)\right\} dy\\
 & \leq & 1-\left(\frac{\Delta_{w,i}^{t+1}}{\Delta_{u,i}^{t+1}}\right)^{a_{i}+a_{i+1}}\\
 & \leq & 1-R_{t}^{-a_{i}-a_{i+1}}.
\end{eqnarray*}
Since the final bound
is independent of $(\gamma_{1}^{t+1},\gamma_{3}^{t+1}$),
we also get $\mathbb{P}\left[\left. u_{i}^{[t+1]C}\neq w_{i}^{[t+1]C}\right|\mathscr{F}_{t}\right]
\,\leq\,1-R_{t}^{-a_{i}-a_{i+1}}$.
Therefore
\begin{eqnarray*}
\mathbb{P}\left[\left. u^{[t+1]C}\neq w^{[t+1]C}\right|\mathscr{F}_{t}\right] 
& = & \mathbb{P}\left[ \left. \cup_{i}\left\{ u_{i}^{[t+1]C}\neq w_{i}^{[t+1]C}\right\}
   \right|\mathscr{F}_{t}\right]\\
 & = & 1-\prod_{i=2,4}\mathbb{P}\left[ \left. \left\{ u_{i}^{[t+1]C}=w_{i}^{[t+1]C}\right\}
   \right|\mathscr{F}_{t}\right]\\
 & \leq & 1-R_{t}^{-a_{2}-a_{3}}R_t^{-a_4-a_5}.  
\end{eqnarray*}
\end{proof}

As we have seen, our ratio $R_{t}$ satisfies $R_{t}\geq \max\left\{ \frac{w_{i}^{t}}{u_{i}^{t}}\right\}$, which is the condition stated at the beginning of Section \ref{sec:RatioRt}.
Our aim now is to show that $R_{t}$ converges to $1$ at a geometric rate,    
or more explicitly to obtain an expression of the form 
\[
\mathbb{E}[R_{t+1}]\;\leq \; 1+C_{R_{0}}\prod_{j=1}^{t+1}r_{j}
\]
where $r_{j}<1$ and $r_{j}$ is ``frequently'' bounded from above by
some $r<1$ (the exact meaning of this will become apparent following
the definition of $\bar{S}_{t}$ in (\ref{eq:Sbar})). Note that in
order to achieve this, it suffices to have for all $t\geq0$ 
\begin{equation}
\mathbb{E}[Q_{t}R_{t}]\leq r_{t+1}\left(\mathbb{E}[R_{t}]-1\right)+1\label{eq:contraction1}
\end{equation}
Recall that $\mathscr{F}_{t}:=\sigma\left(u^{0},v^{0},\gamma_{1}^{1},\ldots,\gamma_{4}^{1},\ldots,\gamma_{1}^{t},\ldots,\gamma_{4}^{t}\right)$.
We can consider (\ref{eq:contraction1}) by conditioning on this filtration
\begin{equation}
\begin{aligned}\mathbb{E}[Q_{t}R_{t}] & =\mathbb{E}\left[R_{t}\mathbb{E}\left[Q_{t}\mid\mathscr{F}_{t}\right]\right]\end{aligned}
\label{eq:contraction2}
\end{equation}
and we may approximate $\mathbb{E}\left[Q_{t}\mid\mathscr{F}_{t}\right]$
with the aid of the following lemma.

\begin{lem}
\label{lem:lemma5}
Let $\mu_{1}=\mathbb{E}\left[\gamma_{3}\right]=a_{3}+a_{4}$ and $\mu_{2}=\mathbb{E}\left[\gamma_{1}-\frac{1}{3}\right]=a_{1}+a_{2}-\frac{1}{3}$, and let 
$\rrr{r}_{t}=1-1/\max\left\{ \left(\frac{4\mu_{1}}{\mu_{2}}+4\right)\left(\frac{u_{2}^{t}}{x}+\frac{x}{v_{2}^{t}}+2\right),4+\frac{4\mu_{1}}{bv_{4}^{t}}\right\} $.  
Let $S$ be a $\mathscr{F}_{t}$-measurable stopping time. Then 
\[
  \mathbb{E}\left[Q_{S}R_{S}\right] \; \leq \; \mathbb{E}\left[\rrr{r}_{S}\left(R_{S}-1\right)\right]+1 \,.
\] 
\end{lem}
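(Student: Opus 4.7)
The plan is to reduce to establishing, for each fixed $t\geq 0$, the pointwise conditional inequality
\[
   \mathbb{E}[Q_t R_t \mid \mathscr{F}_t] \;\leq\; 1 + (R_t-1)\hat{r}_t,
\]
from which the stopped version follows by decomposing $\mathbb{E}[Q_S R_S] = \sum_{t\geq 0} \mathbb{E}[R_t\,\mathbb{E}[Q_t\mid\mathscr{F}_t]\,\mathbf{1}_{\{S=t\}}]$ and invoking the pointwise bound on each summand (using that $\{S=t\}\in\mathscr{F}_t$ and that $R_S$ and $\hat{r}_S$ are $\mathscr{F}_S$-measurable).

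The first step of the pointwise bound is algebraic. Let $T_1$ and $T_2$ denote the two ratios inside the max defining $Q_t$ (so $Q_t = \max\{T_1,T_2\}$). Using $v_2^t = R_t u_2^t$ and $v_4^t = R_t u_4^t$, a direct manipulation of the kind already carried out in the proof of Lemma \ref{lem:lemmaRnonincreasing} (cf. (\ref{eq:ratiocalc})) yields $R_t T_i = 1 + (R_t - 1)\,S_i$ for explicit $S_1, S_2 \in [0,1]$; namely $S_1 = \gamma_3^{t+1}/(\gamma_3^{t+1}+bv_4^t)$, with $S_2$ coming from the analogous manipulation of the second ratio. Hence the pointwise bound reduces to
\[
   \mathbb{E}[\max\{S_1,S_2\}\mid \mathscr{F}_t] \;\leq\; 1 - \frac{1}{\max\{A_t,B_t\}}.
\]

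For this main estimate I would exploit the independence of $\gamma_1^{t+1}$ and $\gamma_3^{t+1}$ together with Markov-type tail bounds for the gamma distribution: $\mathbb{P}[\gamma_3^{t+1}\leq 4\mu_1]\geq 3/4$ (from $\mathbb{E}[\gamma_3^{t+1}]=\mu_1$), and an analogous control of $\gamma_1^{t+1}$ in the denominator of $S_2$ based on $\mathbb{E}[\gamma_1^{t+1}-1/3]=\mu_2$. These two estimates account for the constants $4$ and $1/3$ appearing in $A_t, B_t$. On a joint good event $G$ of probability at least $1/4$, the denominators in $S_1$ and $S_2$ admit deterministic upper bounds that give $1-S_1 \geq 4/B_t$ and $1-S_2 \geq 4/A_t$, hence $\min\{1-S_1, 1-S_2\}\geq 4/\max\{A_t,B_t\}$ on $G$; combining with the trivial bound $\min\{1-S_1,1-S_2\}\geq 0$ on $G^c$ gives $\mathbb{E}[\min\{1-S_1,1-S_2\}\mid \mathscr{F}_t]\geq 1/\max\{A_t,B_t\}$, which is equivalent to the desired bound on $\mathbb{E}[\max\{S_1,S_2\}\mid\mathscr{F}_t]$.

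The main obstacle is the precise bookkeeping of constants: because the max in $Q_t$ couples $\gamma_3^{t+1}$ between $S_1$ and $S_2$, and $S_2$ depends additionally on $\gamma_1^{t+1}$, the two Markov-type estimates must be applied simultaneously in a way that produces a joint good event of probability at least $1/4$ and, at the same time, a slack factor of exactly $4$ in each of the two pointwise bounds. In particular, extracting the factor $(u_2^t/x+x/v_2^t+2)$ in $A_t$ from the interplay between the $\gamma_3^{t+1}$ and $\gamma_1^{t+1}$ terms in the denominator of $S_2$, and matching it to the ratios $v_2^t/(v_2^t+x)$ and $u_2^t/(u_2^t+x)$ in the numerator, is where the calculation becomes most delicate.
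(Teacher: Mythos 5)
Your reduction to the pointwise conditional bound and your algebraic identity are fine: indeed $R_tT_i=1+(R_t-1)S_i$ with $S_1=\gamma_3^{t+1}/(\gamma_3^{t+1}+bv_4^t)$ and $S_2=\bigl(\gamma_3^{t+1}+\tfrac{\gamma_1^{t+1}}{(1+x/u_2^t)(1+x/v_2^t)}\bigr)/\bigl(\gamma_3^{t+1}+\tfrac{\gamma_1^{t+1}}{1+x/v_2^t}\bigr)$, so $Q_tR_t=1+(R_t-1)\max\{S_1,S_2\}$ because $R_t\geq1$; and the overall strategy (a good event of probability at least $1/4$ on which the $\gamma$'s are controlled, the trivial bound $\max\{S_1,S_2\}\leq1$ elsewhere) is essentially the paper's. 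The genuine gap is in the main estimate, where your constant bookkeeping does not close. Writing $A_t=(\tfrac{4\mu_1}{\mu_2}+4)(\tfrac{u_2^t}{x}+\tfrac{x}{v_2^t}+2)$ and $B_t=4+\tfrac{4\mu_1}{bv_4^t}$: on the event $\{\gamma_3^{t+1}\leq4\mu_1\}$ supplied by Markov's inequality you only get $1-S_1=\bigl(1+\gamma_3^{t+1}/(bv_4^t)\bigr)^{-1}\geq\bigl(1+4\mu_1/(bv_4^t)\bigr)^{-1}\geq1/B_t$, not the claimed $4/B_t$; the bound $1-S_1\geq4/B_t$ requires $\gamma_3^{t+1}\leq\mu_1$, which Markov cannot give with probability $3/4$. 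So after multiplying by $\mathbb{P}[G]\geq1/4$ you fall a factor of $4$ short and prove only the weaker statement with $\hat{r}_t$ replaced by $1-1/(4\max\{A_t,B_t\})$. Moreover, the control of $\gamma_1^{t+1}$ that you leave unspecified cannot be of "Markov type'': since $\gamma_1^{t+1}$ enters $1-S_2$ favourably, you need a lower-tail bound $\mathbb{P}[\gamma_1^{t+1}\geq\mu_2]\geq1/2$, and Markov's inequality only bounds upper tails. This is a gamma-specific median fact (Chen and Rubin \cite{key-4}: the median of a gamma lies between its mean minus $1/3$ and its mean), which is exactly what the paper invokes; the constants $4$ and $1/3$ in $\hat r_t$ are thus misattributed in your sketch --- the $4$ comes from the probability-$1/4$ good event, the $1/3$ from the median-versus-mean gap, not from a threshold $4\mu_1$ or a shifted Markov estimate.

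To repair the argument, take $G=\{\gamma_3^{t+1}\leq\mu_1\}\cap\{\gamma_1^{t+1}\geq\mu_2\}$, which by \cite{key-4} and independence has probability at least $1/4$. By the monotonicity of Lemma \ref{lem:lemma4}, on $G$ one has $1-S_1\geq\bigl(1+\mu_1/(bv_4^t)\bigr)^{-1}=4/B_t$ and $1-S_2\geq\Bigl(\bigl(\tfrac{\mu_1}{\mu_2}+1\bigr)\bigl(1+\tfrac{x}{v_2^t}\bigr)\bigl(1+\tfrac{x}{u_2^t}\bigr)\tfrac{u_2^t}{x}\Bigr)^{-1}\geq\bigl(\bigl(\tfrac{\mu_1}{\mu_2}+1\bigr)\bigl(\tfrac{u_2^t}{x}+\tfrac{x}{v_2^t}+2\bigr)\bigr)^{-1}=4/A_t$ (using $u_2^t\leq v_2^t$). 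Then your averaging step, $\mathbb{E}[\min\{1-S_1,1-S_2\}\mid\mathscr{F}_t]\geq\tfrac14\cdot\tfrac{4}{\max\{A_t,B_t\}}$, gives exactly $\mathbb{E}[\max\{S_1,S_2\}\mid\mathscr{F}_t]\leq1-1/\max\{A_t,B_t\}=\hat r_t$, and your stopping-time decomposition finishes the proof; this completed version coincides with the paper's argument up to the (harmless) repackaging through $S_1,S_2$.
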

\begin{proof}
By \cite{key-4} we have $\mathbb{P}\left[\gamma_{3}\leq\mu_{1}\right]\geq\frac{1}{2}$
and $\mathbb{P}\left[\gamma_{1}\geq\mu_{2}\right]\geq\frac{1}{2}$.  
Hence by Lemma \ref{lem:lemma4}, for any $t$, the
probability is at least $\frac{1}{4}$ that $Q_{t}\leq \max\left\{ \left(\frac{\mu_{1}+\frac{\mu_{2}}{1+\frac{x}{u_{2}^{t}}}}{\mu_{1}+\frac{\mu_{2}}{1+\frac{x}{v_{2}^{t}}}}\right),\left(\frac{\mu_{1}+bu_{4}^{t}}{\mu_{1}+bv_{4}^{t}}\right)\right\} $.  Then 
\begin{eqnarray}
\mathbb{E}\left[Q_{S}\mid\mathscr{F}_{S}\right] & \leq & \frac{1}{4}\cdot 
\max\left\{ \left(\frac{\mu_{1}+\frac{\mu_{2}}{1+\frac{x}{u_{2}^{S}}}}{\mu_{1}+\frac{\mu_{2}}{1+\frac{x}{v_{2}^{S}}}}\right),\left(\frac{\mu_{1}+bu_{4}^{S}}{\mu_{1}+bv_{4}^{S}}\right)\right\} +1\cdot\frac{3}{4}\nonumber \\
 & = & \frac{1}{4}\cdot \max\left\{ \left(1-\frac{\frac{1}{1+\frac{x}{v_{2}^{S}}}-\frac{1}{1+\frac{x}{u_{2}^{S}}}}{\frac{\mu_{1}}{\mu_{2}}+\frac{1}{1+\frac{x}{v_{2}^{S}}}}\right),\left(1-\frac{bv_{4}^{S}-bu_{4}^{S}}{\mu_{1}+bv_{4}^{S}}\right)\right\} +\frac{3}{4}\nonumber \\
 & \leq & \frac{1}{4}\cdot \max\left\{ 1-\frac{\left(1-\frac{1}{R_{S}}\right)}{\left(\frac{\mu_{1}}{\mu_{2}}+1\right)\left(1+\frac{x}{v_{2}^{S}}\right)\left(1+\frac{x}{u_{2}^{S}}\right)\frac{u_{2}^{S}}{x}},1-\frac{\left(1-\frac{1}{R_{S}}\right)}{1+\frac{\mu_{1}}{bv_{4}^{S}}}\right\} +\frac{3}{4}\nonumber \\
 & \leq & 1-\frac{\left(1-\frac{1}{R_{S}}\right)}{\max\left\{ \left(\frac{4\mu_{1}}{\mu_{2}}+4\right)\left(\frac{u_{2}^{S}}{x}+\frac{x}{v_{2}^{S}}+2\right),4+\frac{4\mu_{1}}{bv_{4}^{S}}\right\} }\nonumber \\
 & = & \rrr{r}_{S}+\frac{1-\rrr{r}_{S}}{R_{S}}\label{eq:rexpression}
\end{eqnarray}
Substituting this into (\ref{eq:contraction2}), we get the desired
result.
\end{proof}

Our task in the next section  will be to show that we frequently have $\rrr{r}_{t}\leq r$
for some $r<1$, which by Lemma \ref{lem:lemma5}
will result in an expression of the form given by (\ref{eq:contraction1}).

\section{\label{sub:Super-martingale}Auxiliary processes with drift conditions}

We begin by stating the first of three assumptions, all of which will be justified in the next section. The assumptions are on the existence of certain auxiliary processes that will be used to bound to the random
part of $\rrr{r}_{t}$, namely $\max\left\{ \left(\frac{4\mu_{1}}{\mu_{2}}+4\right)\left(\frac{u_{2}^{t}}{x}+\frac{x}{v_{2}^{t}}+2\right),4+\frac{4\mu_{1}}{bv_{4}^{t}}\right\} $. We will show that frequently (a positive proportion of time) these processes are bounded by a constant, which by Lemma \ref{lem:lemma5} implies that $\rrr{r_{t}}$ is frequently bounded by some $r<1$.  

The first two assumptions are conditions on general random processes.

\begin{assmpt}
\label{assmpt:A1}
Let $\vec{X}_{t} $ be a Markov chain taking values in $\mathbb{R}^d$, adapted to $\mathscr{F}_{t}$. Let $J_{t}=J\left(\vec{X}_{t}\right)$, where $J$ is a non-negative, deterministic function. Assume that there exist constants $C>0$ and $A \in \left[0,1\right)$ such that $\mathbb{E}\left[J_{t+1} \left\vert \mathscr{F}_{t} \right. \right] \leq AJ_{t}+ C$ for all $t\geq 0$.
\end{assmpt} 

\noindent Let $\eta = 2C/\left(1-A\right)$ and $\beta = \left(1+A\right)/2$, and observe that if $J_{t}\geq \eta$, $\mathbb{E}\left[J_{t+1} \left\vert \mathscr{F}_{t} \right. \right] \leq \beta J_{t}$. 

\begin{lem}
\label{lem:lemma8}
Suppose that Assumption \ref{assmpt:A1} holds, and let 
\begin{eqnarray}
   \bar{S}_{t} & := & \left\{ 1\leq i\leq t\,\vert\, J_{i}\leq\eta\right\} \,.
\label{eq:Sbar}
\end{eqnarray}  
Then for any $t$ and $k$, 
$
\mathbb{P}\left[\left|\bar{S}_{t}\right|=k\left|J_{0}\leq\eta\right.\right] \; \leq \; \binom{t}{k}\beta^{t-k}\,.
$
\end{lem}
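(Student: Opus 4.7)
My plan is to reduce the bound to a per-set estimate and then prove that estimate by induction on $t$, using a slight strengthening that tracks the current size of $J_t$.

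Since $\mathbb{P}[|\bar S_t|=k\mid J_0\le\eta] = \sum_{|S|=k}\mathbb{P}[\bar S_t = S \mid J_0\le\eta]$ and there are $\binom{t}{k}$ subsets of $\{1,\dots,t\}$ of size $k$, it suffices to prove $\mathbb{P}[\bar S_t = S\mid J_0\le\eta] \le \beta^{t-|S|}$ for every $S\subseteq\{1,\dots,t\}$. The key preliminary observation, which follows from Assumption \ref{assmpt:A1} upon dividing by $\eta = 2C/(1-A)$ (so that $C/\eta = (1-A)/2$) and checking two short cases, is the multiplicative drift
\[
  \mathbb{E}\!\left[\left.\tfrac{J_{t+1}}{\eta}\,\right|\mathscr F_t\right] \;\le\; \beta\,W_t, \qquad \text{where}\quad W_t \;:=\; \max\!\left(\tfrac{J_t}{\eta},\,1\right).
\]
(When $J_t\le\eta$, this reduces to $A+(1-A)/2=\beta$; when $J_t>\eta$, it reduces to $A\tfrac{J_t}{\eta}+\tfrac{1-A}{2}\le \beta\tfrac{J_t}{\eta}$, which holds because $J_t/\eta\ge 1$.)

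The main step is to prove by induction on $t$ the strengthened statement
\[
  \mathbb{E}\!\left[\mathbf 1_{\bar S_t = S}\,W_t \;\middle|\; J_0\le\eta\right] \;\le\; \beta^{t-|S|} \qquad\text{for every } S\subseteq\{1,\dots,t\}.
\]
The base case $t=0$ is immediate since $W_0=1$ when $J_0\le\eta$ and $S=\emptyset$. For the inductive step I split on whether $t$ lies in $S$. If $t\notin S$, set $S' := S\subseteq\{1,\dots,t-1\}$; on $\{\bar S_t=S\}$ one has $W_t = J_t/\eta$, so conditioning on $\mathscr F_{t-1}$, applying the drift, and using the induction hypothesis gives
\[
  \mathbb{E}[\mathbf 1_{\bar S_t=S}\,W_t]
   \;\le\; \mathbb{E}\!\left[\mathbf 1_{\bar S_{t-1}=S'}\,\mathbb{E}[J_t/\eta\mid\mathscr F_{t-1}]\right]
   \;\le\; \beta\,\mathbb{E}[\mathbf 1_{\bar S_{t-1}=S'}\,W_{t-1}]
   \;\le\; \beta^{t-|S|}.
\]
If $t\in S$, set $S':=S\setminus\{t\}$; on the event $W_t=1$, and, dropping the indicator $\mathbf 1_{\{J_t\le\eta\}}$ and using $W_{t-1}\ge 1$,
\[
 \mathbb{E}[\mathbf 1_{\bar S_t=S}\,W_t]
 \;\le\; \mathbb{P}[\bar S_{t-1}=S']
 \;\le\; \mathbb{E}[\mathbf 1_{\bar S_{t-1}=S'}\,W_{t-1}]
 \;\le\; \beta^{(t-1)-(|S|-1)}
 \;=\; \beta^{t-|S|}.
\]
Summing $\mathbb{P}[\bar S_t=S\mid J_0\le\eta]\le \beta^{t-k}$ over the $\binom{t}{k}$ subsets $S$ of size $k$ yields the claim.

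The main delicacy is handling indices $i$ with $J_{i-1}>\eta$: a direct Markov bound gives only $\mathbb{P}[J_i>\eta\mid\mathscr F_{i-1}]\le \beta J_{i-1}/\eta$, which may exceed one. Carrying the envelope $W_t$ through the induction is precisely the device needed to absorb this excess --- at a ``bad'' step the extra factor $J_{i-1}/\eta$ is generated, but is immediately paid off by the multiplicative drift at the next step, so that each index in $T = S^c$ contributes exactly one clean factor of $\beta$ to the final product.
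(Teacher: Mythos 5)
Your proof is correct, and it takes a genuinely different route from the paper's. The paper performs the same first reduction, to the per-subset bound $\mathbb{P}\left[\bar S_t=\{c_1,\ldots,c_k\}\,\middle|\,J_0\le\eta\right]\le\beta^{t-k}$, but it then decomposes the complement of $\{c_1,\ldots,c_k\}$ into maximal excursions above $\eta$, bounds each excursion by combining the stopped-process estimate of Lemma \ref{lem:lemma6} with Markov's inequality, and glues the excursions together by successive conditioning, which invokes the Markov property of $(\ddd{u}^t,\ddd{v}^t)$ through the sup-over-states device of Lemma \ref{lem:lemma7}. You instead run a one-step induction carrying the envelope $W_t=\max\{J_t/\eta,1\}$, whose multiplicative drift $\mathbb{E}\left[J_{t+1}/\eta\,\middle|\,\mathscr F_t\right]\le\beta W_t$ follows directly from Assumption \ref{assmpt:A1} (your two-case check is right, since $C/\eta=(1-A)/2$), and the split on whether $t\in S$ closes the induction cleanly. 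This buys you two things: you bypass Lemmas \ref{lem:lemma6} and \ref{lem:lemma7} entirely, and you never use the Markov structure of the underlying chain --- only the adapted drift inequality --- so your argument applies verbatim in the greater generality noted at the end of Section \ref{sub:ExistenceDt}, where the $K_{i,t}$ may involve extra randomness. What the paper's route buys in exchange is reuse: the stopped-process bound of Lemma \ref{lem:lemma6} is needed anyway (via the Remark following it) to treat the case $J_0>\eta$ in Theorem \ref{thm:thm1}, so that machinery is not introduced solely for this lemma. Two small points you should make explicit: the final passage from the weighted statement to the probability bound uses $W_t\ge1$, and the conditioning on $\{J_0\le\eta\}\in\mathscr F_0$ passes through your tower-property step because that event is $\mathscr F_{t-1}$-measurable; both are immediate.
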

\noindent The proof is left to Section \ref{subs:remproofs}.  The above result will be used to show
that $J_t$ 
is frequently bounded by $\eta$. Now writing 
$\ddd{u}^{t}= \left(u_{1}^{t}, u_{2}^{t}, u_{3}^{t}, u_{4}^{t} \right)$ (recall the definition of $u_{1}^{t}$ 
and $u_{3}^{t}$ from  (\ref{eq:mc4D})) and similarly for  $\ddd{v}^{t}$, we state our next assumption. 

\begin{assmpt}
\label{assmpt:A2}
Fix $N\geq 1$. Assume that for $i=1,\ldots ,N$ there exist functions $K_{i}$ such that the processes $K_{i,t}=K_{i}\left(\vec{u_{t}},\vec{v_{t}}\right)$ satisfy 
\begin{equation}
\mathbb{E}\left[K_{i,t+1}\vert\mathscr{F}_{t}\right]\; \leq\; \zeta_{i}K_{i,t}+C_{i}\label{eq:kit}
\end{equation}
for $t\geq0$,  where $\zeta_{i}<1$ and $C_{i}$ are constants.
\end{assmpt} 
\noindent Let $J_{t} = \sum_{i} K_{i,t}$. Observe that under Assumption \ref{assmpt:A2}, the process $J_{t}$ satisfies Assumption \ref{assmpt:A1}, with $A=\max\left\{ \zeta_{i}\right\}$ and $C=\sum_{i}C_{i}$. 
\begin{assmpt}
\label{assmpt:A3}
There is a process $D_{t}$ adapted to $\mathscr{F}_{t}$ such that for all $t\geq 1$
\begin{equation}
  D_{t} \;\geq \; \max\left\{ \left(\frac{4\mu_{1}}{\mu_{2}}+4\right)\left(\frac{u_{2}^{t}}{x}+
  \frac{x}{v_{2}^{t}}+2\right),4+\frac{4\mu_{1}}{bv_{4}^{t}}\right\} 
 \label{eq:dtcondit1}
\end{equation}
and
\begin{equation}
D_{t+1} \; \leq \; \omega_{N+1,t+1}+\sum_{i=1}^{N}\omega_{i,t+1}K_{i,t}
\label{eq:dtinequality}
\end{equation}
where $\left(\omega_{1,t+1},\ldots,\omega_{N+1,t+1}\right)$ is a
non-negative random vector that is i.i.d. over time $t\geq1$, measurable
w.r.t. $\mathscr{F}_{t+1}$ and independent of $\mathscr{F}_{t}$.
\end{assmpt} 
 
\noindent The reasons for the condition (\ref{eq:dtinequality}) will become apparent when we construct $D_t$. Note that $\rrr{r}_{t} \leq 1- 1/D_{t}$, which is used in the next lemma.  The idea
is that if $J_t$ is bounded, then $D_t$ is probably not too large, and $\rrr{r}_t$ is not too close to 1.
 
\begin{lem}
\label{lem:newlemma1}
Suppose that Assumptions \ref{assmpt:A2} and \ref{assmpt:A3} hold. Let $S\geq1$ be an a.s. finite stopping time adapted to $\mathscr{F}_{t}$ such that $J_{S} \leq \eta$. Let $r:=1-1/\left(\left(\theta_{1}+\ldots+\theta_{N}\right)\eta+\theta_{N+1}\right)$
and $\theta_{i}:=\mathbb{E}\left[\omega_{i,t+1}\right]$.  Then:
\begin{enumerate}
\item $\mathbb{E}\left[R_{S+2} - 1\right] \leq r\mathbb{E}\left[R_{S} - 1\right]$. 
\item More generally, if $0\leq Y \in \mathscr{F}_{S}$, then $\mathbb{E}\left[R_{S+2} - 1\right] 
\leq r\mathbb{E}\left[Y\left(R_{S} - 1\right)\right]$.
\end{enumerate}
\end{lem}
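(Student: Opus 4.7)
The plan is to apply the one-step conditional contraction inside Lemma \ref{lem:lemma5} at the stopping time $S{+}1$, reduce to $R_S$ via the monotonicity of $\{R_t\}$, and then average out $D_{S+1}$ using Assumption \ref{assmpt:A3} and Jensen's inequality. The hypothesis $J_S\le \eta$ enters only at the very end, to convert a data-dependent bound into the constant $r$.

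For the first step, the derivation inside Lemma \ref{lem:lemma5}, stripped of its outer expectation, really establishes the pointwise bound $\mathbb{E}[Q_t\mid \mathscr{F}_t]\le \hat r_t + (1-\hat r_t)/R_t$ from (\ref{eq:rexpression}); multiplying by $R_t$ and using $R_{t+1}\le Q_tR_t$ from Lemma \ref{lem:lemmaRnonincreasing} gives $\mathbb{E}[R_{t+1}-1\mid \mathscr{F}_t]\le \hat r_t(R_t-1)$ for each fixed $t$. Since $S+1$ is a stopping time, the standard argument on the sets $\{S+1=t\}\in \mathscr{F}_t$ upgrades this to $\mathbb{E}[R_{S+2}-1\mid \mathscr{F}_{S+1}]\le \hat r_{S+1}(R_{S+1}-1)$. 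Now $\hat r_{S+1}\le 1-1/D_{S+1}\in [0,1)$ by (\ref{eq:dtcondit1}), and $0\le R_{S+1}-1\le R_S-1$ by the monotonicity of $\{R_t\}$, so
\[
    \mathbb{E}[R_{S+2}-1\mid \mathscr{F}_{S+1}]\ \le\ \bigl(1-\tfrac{1}{D_{S+1}}\bigr)(R_S-1).
\]

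For the second step, I condition further on $\mathscr{F}_S$. By Assumption \ref{assmpt:A3} the vector $(\omega_{\cdot,S+1})$ is independent of $\mathscr{F}_S$ and the $K_{i,S}$ are $\mathscr{F}_S$-measurable, so (\ref{eq:dtinequality}) yields $\mathbb{E}[D_{S+1}\mid \mathscr{F}_S]\le \theta_{N+1}+\sum_{i=1}^{N}\theta_i K_{i,S}$. Convexity of $x\mapsto 1/x$ gives $\mathbb{E}[1/D_{S+1}\mid \mathscr{F}_S]\ge 1/\mathbb{E}[D_{S+1}\mid \mathscr{F}_S]$, hence
\[
   \mathbb{E}\!\left[1-\tfrac{1}{D_{S+1}}\,\Big|\,\mathscr{F}_S\right]\ \le\ 1-\frac{1}{\theta_{N+1}+\sum_{i=1}^{N}\theta_i K_{i,S}}.
\]
Each $K_{i,S}\ge 0$ and $\sum_i K_{i,S}=J_S\le \eta$, so $K_{i,S}\le \eta$ for every $i$, and the right-hand side is bounded by $1-1/(\theta_{N+1}+\eta\sum_i \theta_i)=r$.

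Combining the two steps via the tower property and the $\mathscr{F}_S$-measurability of $R_S-1\ge 0$ gives $\mathbb{E}[R_{S+2}-1\mid \mathscr{F}_S]\le r(R_S-1)$; taking expectations yields part (1), and multiplying by any non-negative $Y\in \mathscr{F}_S$ before taking expectations gives $\mathbb{E}[Y(R_{S+2}-1)]\le r\,\mathbb{E}[Y(R_S-1)]$, which I take to be the intended form of part (2) (the literal statement as written is vacuous at $Y=0$). The only delicate point is the random-time version of the one-step bound: one needs that $S+1$ is a stopping time and that the i.i.d.\ structure of $(\omega_{\cdot,t+1})$ in Assumption \ref{assmpt:A3} still gives independence of $\mathscr{F}_S$ at time $S+1$ (routine, but worth making explicit). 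Everything else is bookkeeping of monotonicity and Jensen.
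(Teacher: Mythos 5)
Your proof is correct and follows essentially the same route as the paper: Lemma \ref{lem:lemma5} (equivalently (\ref{eq:rexpression})) applied at the random time, the monotonicity $R_{S+2}-1\leq Q_{S+1}R_{S+1}-1$ with $R_{S+1}\leq R_S$, the bound on $D_{S+1}$ from Assumption \ref{assmpt:A3}, independence of $(\omega_{\cdot,S+1})$ from $\mathscr{F}_S$, and Jensen's inequality. The differences are only organizational --- you work conditionally on $\mathscr{F}_S$ and invoke $J_S\leq\eta$ (hence $K_{i,S}\leq\eta$) at the last step, whereas the paper first bounds $D_{S+1}\leq\eta\sum_i\omega_{i,S+1}+\omega_{N+1,S+1}$ and then takes unconditional expectations --- and your reading of part (2) as $\mathbb{E}\left[Y\left(R_{S+2}-1\right)\right]\leq r\,\mathbb{E}\left[Y\left(R_{S}-1\right)\right]$ is exactly what the paper's own proof (via (\ref{eq:Y})) establishes.
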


\begin{proof}
We start by observing that  $D_{S+1}\leq\eta\sum\omega_{i,S+1}+\omega_{N+1,S+1}$.
Therefore, applying Lemma \ref{lem:lemma5} we get 
\begin{eqnarray}
\mathbb{E}\left[R_{S+2}\right] & \leq & \mathbb{E}\left[Q_{S+1}R_{S+1}\right]\nonumber \\
 & \leq & \mathbb{E}\left[\rrr{r}_{S+1}\left(R_{S+1}-1\right)\right]+1
  \label{eq.plus1} \\
 & \leq & \mathbb{E}\left[\left(1-\frac{1}{D_{S+1}}\right)\left(R_{S}-1\right)\right]+1
   \hspace{10mm}\hbox{(using $R_{t+1}\leq R_t$)}
 \nonumber \\
 & \leq & \mathbb{E}\left[\left(1-\frac{1}{\eta\sum\omega_{i,S+1}+\omega_{N+1,S+1}}\right)\left(R_{S}-1\right)\right]+1\nonumber \\
 & = & \mathbb{E}\left[\left(1-\frac{1}{\eta\sum\omega_{i,S+1}+\omega_{N+1,S+1}}\right)\right]\mathbb{E}\left[\left(R_{S}-1\right)\right]+1\nonumber \\
 & \leq & r\,\mathbb{E}\left[\left(R_{S}-1\right)\right]+1
   \hspace{15mm}\hbox{(by Jensen's inequality)}
 \label{eq:rs2}
\end{eqnarray}

\noindent By a derivation identical to (\ref{eq:rs2}) we get
\begin{equation}
\mathbb{E}\left[YR_{S+2}\right]\leq r\mathbb{E}\left[Y\left(R_{S}-1\right)\right]+\mathbb{E}\left[Y\right]  \,.   
\label{eq:Y}
\end{equation}
The term $\mathbb{E}\left[Y\right]$ in the right-hand side of (\ref{eq:Y})
arises in (\ref{eq.plus1}), as a result of applying (\ref{eq:rexpression}).
\end{proof}

\noindent The following are the main results of this section. The proofs are given in Section \ref{subs:remproofs}.
\begin{lem}
\label{lem:lemma9} Suppose that Assumptions \ref{assmpt:A2} and \ref{assmpt:A3} hold.
Then in the event $\left\{ J_{0}\leq\eta\right\} $
\[
  \mathbb{E}\left[\left. R_{t+2}\mathbf{1}_{\left|\bar{S}_{t}\right|>k}\right|\mathscr{F}_{0}\right]
 -\mathbb{P}\left[\left. \left|\bar{S}_{t}\right|>k\right|\mathscr{F}_{0}\right]  \; \leq \;
   r^{\left\lceil \left(k+1\right)/2\right\rceil }\left(R_{0}-1\right)
\]
\end{lem}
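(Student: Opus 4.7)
My plan is to iterate Lemma~\ref{lem:newlemma1} along a carefully chosen subsequence of the ``good times'' making up $\bar S_t$. First I would define stopping times $\tau_1<\tau_2<\cdots$ recursively by $\tau_1:=\inf\{i\geq 1:J_i\leq \eta\}$ and $\tau_{j+1}:=\inf\{i\geq \tau_j+2:J_i\leq \eta\}$ (with $\inf\emptyset:=\infty$). The built-in spacing $\tau_{j+1}\geq \tau_j+2$ is exactly what is needed to apply Lemma~\ref{lem:newlemma1} repeatedly, since that lemma jumps two time steps ahead. The combinatorial driver is: if $T_1<\cdots<T_n$ are all elements of $\bar S_t$ in increasing order, an easy induction gives $\tau_j\leq T_{2j-1}$ whenever $2j-1\leq n$, so in particular $\{|\bar S_t|>k\}\subseteq\{\tau_M\leq t\}$ where $M:=\lceil (k+1)/2\rceil$. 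Combined with the monotonicity of $R_t$ from Lemma~\ref{lem:lemmaRnonincreasing}, this would reduce matters to showing
\[
\mathbb{E}\!\left[(R_{\tau_M+2}-1)\mathbf{1}_{\tau_M\leq t}\,\big|\,\mathscr{F}_0\right]\;\leq\;r^M(R_0-1)\qquad\text{on }\{J_0\leq\eta\}.
\]

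Next, to bring the setup under the hypotheses of Lemma~\ref{lem:newlemma1} (which requires finite stopping times), I would truncate: set $\bar\tau_j:=\tau_j\wedge (t+1)$ and $Y_j:=\mathbf{1}_{\tau_j\leq t}\in\mathscr{F}_{\bar\tau_j}$. On $\{Y_j=1\}$ we have $\bar\tau_j=\tau_j$ and $J_{\bar\tau_j}\leq\eta$, which is exactly the ingredient used inside the proof of Lemma~\ref{lem:newlemma1} to obtain the bound $D_{\bar\tau_j+1}\leq\eta\sum_i\omega_{i,\bar\tau_j+1}+\omega_{N+1,\bar\tau_j+1}$; on $\{Y_j=0\}$ the indicator annihilates every term. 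Applying the proof of Lemma~\ref{lem:newlemma1}(2) with $S=\bar\tau_j$ and $Y=Y_j$ would then yield
\[
\mathbb{E}\!\left[Y_j(R_{\bar\tau_j+2}-1)\right]\;\leq\;r\,\mathbb{E}\!\left[Y_j(R_{\bar\tau_j}-1)\right].
\]

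I would finish by induction on $j\leq M$ of the claim $\mathbb{E}[Y_j(R_{\bar\tau_j+2}-1)\,|\,\mathscr{F}_0]\leq r^j(R_0-1)$. The base case $j=1$ uses $R_{\bar\tau_1}\leq R_0$ by monotonicity. For the inductive step, on $\{Y_{j+1}=1\}$ one has $Y_j=1$ and $\bar\tau_{j+1}\geq \bar\tau_j+2$, so monotonicity of $R$ yields $Y_{j+1}(R_{\bar\tau_{j+1}}-1)\leq Y_j(R_{\bar\tau_j+2}-1)$; combining this with the previous display and the inductive hypothesis closes the loop. Setting $j=M$ and assembling with the first-paragraph reduction then delivers the claimed bound. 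The main obstacle I anticipate is precisely the truncation step: one must verify carefully that multiplying by $Y_j$ is enough to effectively enforce the hypothesis $J_{\bar\tau_j}\leq \eta$ throughout the argument of Lemma~\ref{lem:newlemma1}, even though $\tau_j$ may be infinite with positive probability.
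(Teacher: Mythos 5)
Your proposal is correct and is essentially the paper's own argument: the paper likewise iterates the two-step contraction (\ref{eq:Y}) along the times at which $J\leq\eta$, exploiting that consecutive such times are integers so that $\tau_{k+1}\geq\tau_{k-1}+2$, and using monotonicity of $R_t$ to bridge the gaps, which produces the same exponent $\left\lceil (k+1)/2\right\rceil$ (the paper just runs the induction downward from $\tau_{k+1}$ rather than forward). Your refinements --- building the $+2$ spacing into the stopping times, the counting bound $\tau_j\leq T_{2j-1}$, and the truncation $\bar\tau_j=\tau_j\wedge(t+1)$ with the indicator $Y_j$ enforcing $J_{\bar\tau_j}\leq\eta$ on the relevant event --- are a more explicit bookkeeping of the same mechanism, which the paper handles implicitly via the indicator $\mathbf{1}_{\tau_{k+1}\leq t}$ in its application of (\ref{eq:Y}).
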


\begin{cor}
\label{cor:corollary10}
Let $d\,=\,\max\left\{3,\ln(\beta|\ln \beta|\sqrt{r}/2)/\ln \beta\right\}$.  
Then in $\left\{ J_{0}\leq\eta\right\}$, we have
$\mathbb{E}\left[R_{t+2}\left|\mathscr{F}_{0}\right.\right]\,\leq\,1+3r^{t/2d}\left(R_{0}-1\right)$
for all $t>0$.
\end{cor}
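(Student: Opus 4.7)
The plan is to decompose the expectation according to whether the count $|\bar{S}_t|$ of ``good'' times exceeds a threshold $k$ (to be chosen as $k=\lfloor t/d\rfloor$). Writing
\[
  \mathbb{E}[R_{t+2}-1\mid\mathscr{F}_0] \;=\; \mathbb{E}[(R_{t+2}-1)\mathbf{1}_{|\bar{S}_t|>k}\mid\mathscr{F}_0]\,+\,\mathbb{E}[(R_{t+2}-1)\mathbf{1}_{|\bar{S}_t|\le k}\mid\mathscr{F}_0],
\]
I would bound the first summand directly by Lemma \ref{lem:lemma9}, which gives $r^{\lceil(k+1)/2\rceil}(R_0-1)$ on $\{J_0\le\eta\}$. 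For the second summand, I would apply the non-increasing property $R_{t+2}\le R_0$ from Lemma \ref{lem:lemmaRnonincreasing} to reduce it to $(R_0-1)\mathbb{P}[|\bar{S}_t|\le k\mid\mathscr{F}_0]$, and then use Lemma \ref{lem:lemma8} to bound the latter by $\sum_{j=0}^{k}\binom{t}{j}\beta^{t-j}$.

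The choice $k=\lfloor t/d\rfloor$ makes $\lceil(k+1)/2\rceil\ge t/(2d)$ automatic, so the first summand is at once $\le r^{t/(2d)}(R_0-1)$. For the binomial tail, Stirling's approximation dominates the sum by a multiple of $\binom{t}{k}\beta^{t-k}\le (ed)^{t/d}\beta^{t(d-1)/d}=[ed\,\beta^{d-1}]^{t/d}$ up to polynomial factors. Here the definition of $d$ enters crucially: both branches of the $\max$ guarantee $\beta^{d-1}\le|\ln\beta|\sqrt{r}/2$, with equality in the logarithmic branch and via a direct rearrangement of $\ln(\beta|\ln\beta|\sqrt{r}/2)/\ln\beta\le 3$ when the branch $d=3$ is active. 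Substituting produces a bound of shape $C_t\,r^{t/(2d)}$, and the remaining constants are designed to fit into the factor $3$ in the target.

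The main obstacle will be the careful constant tracking required to ensure the two summands combine to at most $3r^{t/(2d)}(R_0-1)$ uniformly in $t>0$, and in particular to handle small $t$ where the Stirling prefactor has not yet been swallowed by the geometric decay. My plan for the short-time regime is to fall back on the trivial bound $R_{t+2}-1\le R_0-1$: whenever $3r^{t/(2d)}\ge 1$, i.e.\ $t\le 2d\ln 3/|\ln r|$, the corollary follows from this alone; because $d\ge 3$, this range typically covers the first several steps. For $t$ above this threshold, the exponential decay $r^{t/(2d)}$ dominates the Stirling prefactor, the Lemma \ref{lem:lemma8} contribution is bounded by $2r^{t/(2d)}(R_0-1)$, and adding the two yields the stated $3r^{t/(2d)}(R_0-1)$.
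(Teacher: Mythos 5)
Your plan follows the paper's proof almost verbatim up to the last step: the same split over $\{|\bar{S}_{t}|>k\}$ and $\{|\bar{S}_{t}|\le k\}$ with $k=\lfloor t/d\rfloor$, Lemma \ref{lem:lemma9} for the first piece, and $R_{t+2}\le R_{0}$ plus Lemma \ref{lem:lemma8} for the second, reducing matters to showing $r^{\lceil (k+1)/2\rceil}+\sum_{j=0}^{k}\binom{t}{j}\beta^{t-j}\le 3r^{t/2d}$. Two remarks on the tail estimate: you do not need Stirling or ``polynomial factors'' at all --- since $d\ge 3$ forces $k\le t/3$, consecutive binomial coefficients at least double, giving the clean bound $\sum_{j=0}^{k}\binom{t}{j}\beta^{t-j}\le 2\binom{t}{k}\beta^{t-k}$, and the exact inequality $\binom{t}{k}\le (ed)^{t/d}$ follows from $\binom{t}{k}q^{k}(1-q)^{t-k}\le 1$ with $q=1/d$. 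Leaving unquantified polynomial factors in place would already jeopardize the uniform constant $3$, and your small-$t$ fallback (trivial when $3r^{t/2d}\ge 1$) only helps if the exponential rate is right, which brings us to the real problem.

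The genuine gap is the final absorption step. From $\beta^{d-1}\le |\ln\beta|\sqrt{r}/2$ (which you verify correctly in both branches of the $\max$) you can only conclude $\left(ed\,\beta^{d-1}\right)^{t/d}\le\left(ed\,|\ln\beta|\sqrt{r}/2\right)^{t/d}$, and the leftover factor $\left(ed\,|\ln\beta|/2\right)^{t/d}$ grows geometrically in $t$ whenever $ed\,|\ln\beta|>2$. This is the typical situation, not a corner case: in the paper's worked example $\beta\approx 7/9$ and $d\approx 9$--$10$, so $ed\,|\ln\beta|/2\approx 3$, and your bound on the Lemma \ref{lem:lemma8} contribution then does not even tend to $0$, let alone stay below $2r^{t/2d}(R_{0}-1)$; the large-$t$ regime is precisely where your claimed domination is unproven, so the small-$t$/large-$t$ split cannot rescue it. What is missing is the device the paper uses to absorb the factor $ed$: the elementary inequality $y\beta^{y}\le 2\beta^{y/2}/(e|\ln\beta|)$ applied with $y=d$, which converts $ed\,\beta^{d-1}$ into $\frac{2}{\beta|\ln\beta|}\beta^{d/2}$, i.e.\ trades the factor $d$ for (roughly) halving the power of $\beta$; only after this conversion does the logarithmic expression in the definition of $d$ enter, by comparing a pure power of $\beta$ with $\beta|\ln\beta|\sqrt{r}/2$. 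Without that inequality (or some equivalent way of eliminating the factor $ed$ before invoking the definition of $d$), the proposed argument does not close.
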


Now let $T=T(s):=\min\left\{ \tau>s\left|J_{\tau}\leq\eta\right.\right\} $,
and for $s\geq0$ and $t\geq1$ define 
\[
\hat{J}_{s,t}:=\begin{cases}
J_{s+t} & s+t<T,\, J_{s}\leq\eta\\
0 & \hbox{otherwise},
\end{cases}
\]
or in other words $\hat{J}_{s,t}=\mathbf{1}_{\{J_{s}\leq\eta\}\cap\{T>s+t\}}J_{s+t}$.
The  next lemma is proved in Section \ref{subs:remproofs}.

\begin{lem}
\label{lem:lemma6}For the notation and assumptions of the preceding
paragraph, $\mathbb{E}\left[\hat{J}_{s,t+1}\left|\mathscr{F}_{s}\right.\right]\leq\beta^{t+1}\eta$
for $t\geq0$ and $s\geq0$.
\end{lem}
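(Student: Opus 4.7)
The plan is a straightforward induction on $t$, driven by the observation that on the event $\{T > s+t\}$ every intermediate $J_\tau$ (for $\tau = s+1,\ldots,s+t$) strictly exceeds $\eta$, which upgrades the drift from Assumption \ref{assmpt:A1} to the multiplicative form $\mathbb{E}[J_{\tau+1}|\mathscr{F}_\tau] \leq \beta J_\tau$. This multiplicative bound follows from the identity $A\eta+C = \beta\eta$, which is immediate from $\eta = 2C/(1-A)$ and $\beta=(1+A)/2$, as noted right after Assumption \ref{assmpt:A1}.

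For the base case $t=0$, since $\mathbf{1}_{\{J_{s+1}>\eta\}} \leq 1$ and $\mathbf{1}_{\{J_s\leq\eta\}}$ is $\mathscr{F}_s$-measurable, Assumption \ref{assmpt:A1} gives
\[
\mathbb{E}[\hat{J}_{s,1}|\mathscr{F}_s] \;\leq\; \mathbf{1}_{\{J_s\leq\eta\}}\mathbb{E}[J_{s+1}|\mathscr{F}_s] \;\leq\; \mathbf{1}_{\{J_s\leq\eta\}}(AJ_s+C) \;\leq\; A\eta+C \;=\; \beta\eta.
\]

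For the inductive step, decompose
\[
\hat{J}_{s,t+2} \;=\; J_{s+t+2}\mathbf{1}_{\{J_{s+t+2}>\eta\}} \cdot \mathbf{1}_{\{J_s\leq\eta\}}\prod_{\tau=s+1}^{s+t+1}\mathbf{1}_{\{J_\tau>\eta\}},
\]
noting that the right-hand factor is $\mathscr{F}_{s+t+1}$-measurable and forces $J_{s+t+1}>\eta$. Conditioning on $\mathscr{F}_{s+t+1}$, discarding $\mathbf{1}_{\{J_{s+t+2}>\eta\}} \leq 1$, and applying the multiplicative drift,
\[
\mathbb{E}[\hat{J}_{s,t+2}|\mathscr{F}_{s+t+1}] \;\leq\; \beta J_{s+t+1}\mathbf{1}_{\{J_s\leq\eta\}}\prod_{\tau=s+1}^{s+t+1}\mathbf{1}_{\{J_\tau>\eta\}} \;=\; \beta\hat{J}_{s,t+1}.
\]
Taking $\mathbb{E}[\cdot|\mathscr{F}_s]$ and using the inductive hypothesis yields $\mathbb{E}[\hat{J}_{s,t+2}|\mathscr{F}_s] \leq \beta \cdot \beta^{t+1}\eta = \beta^{t+2}\eta$, closing the induction.

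I do not expect a genuine obstacle: the proof is a clean iteration of the contractive drift. The only point that requires care is the correct bookkeeping of the event $\{T>s+t\}$, which must be parsed as ``$J_\tau > \eta$ for every $\tau \in \{s+1,\ldots,s+t\}$,'' and this is precisely what licenses the sharper $\beta J_\tau$ bound at each intermediate step rather than the weaker $AJ_\tau+C$ available a priori.
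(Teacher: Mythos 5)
Your proof is correct and follows essentially the same route as the paper: the one-step contraction $\mathbb{E}\left[\hat{J}_{s,t+1}\left|\mathscr{F}_{s+t}\right.\right]\leq\beta\hat{J}_{s,t}$ (exploiting that on $\{T>s+t\}$ the indicator $\mathbf{1}_{\{J_{s+t}>\eta\}}$ is present, so the drift becomes multiplicative), iterated via the tower property, with the base bound $\mathbb{E}\left[\hat{J}_{s,1}\left|\mathscr{F}_{s}\right.\right]\leq A\eta+C=\beta\eta$ on $\{J_s\leq\eta\}$. The only cosmetic difference is that you phrase the iteration as a formal induction and write the event $\{T>s+t\}$ as an explicit product of indicators, whereas the paper splits on $\{T\leq s+t\}$ versus $\{T>s+t\}$ and uses the Assumption \ref{assmpt:A2} constants $\zeta_i$, $C_i$ in the base step.
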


\subsection{Remaining proofs}
\label{subs:remproofs}

We begin by stating an easy lemma, whose proof we omit.

\begin{lem}
\label{lem:lemma7}Let $Y$ be an $\mathbb{R}^d$-valued random vector. If $A$
is an event and $B\subseteq\mathbb{R}^d$ with $\mathbb{P}[Y\in B]\neq 0$, then 
\[
    \mathbb{P}\left[A\left|Y\in B\right.\right] \;\leq\; 
   \underset{y_{0}\in B}{\sup}\mathbb{P}\left[A\left|Y=y_{0}\right.\right] \,.
\]
\end{lem}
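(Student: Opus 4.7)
My plan is to prove Lemma \ref{lem:lemma7} by disintegrating $\mathbb{P}[A \cap \{Y\in B\}]$ against the distribution of $Y$ and then bounding the integrand pointwise by its supremum over $B$.

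First, since $Y$ takes values in the Borel space $\mathbb{R}^d$, a regular conditional probability $y_0 \mapsto \mathbb{P}[A\mid Y=y_0]$ exists. Writing $P_Y$ for the distribution of $Y$ on $\mathbb{R}^d$, the defining property of this regular conditional probability gives
\[
\mathbb{P}[A \cap \{Y\in B\}] \;=\; \int_{B} \mathbb{P}[A\mid Y=y_0]\, P_Y(dy_0).
\]
This is the key identity; no substantial inequality enters yet.

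Second, I would bound the integrand by the constant $M := \sup_{y_0\in B}\mathbb{P}[A\mid Y=y_0]$, which is valid pointwise on $B$. This gives
\[
\mathbb{P}[A \cap \{Y\in B\}] \;\leq\; M\int_{B} P_Y(dy_0) \;=\; M\,\mathbb{P}[Y\in B].
\]
Dividing through by $\mathbb{P}[Y\in B]>0$ (which is allowed by hypothesis) and recalling the definition $\mathbb{P}[A\mid Y\in B] = \mathbb{P}[A\cap\{Y\in B\}]/\mathbb{P}[Y\in B]$ then produces the stated inequality.

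There is no real obstacle here; the only point requiring a modicum of care is that the supremum $M$ is taken with respect to some fixed version of the regular conditional probability, and that this version is measurable in $y_0$ so that the integral above is well-defined. Both are automatic in the $\mathbb{R}^d$-valued setting, so the argument is essentially a one-line disintegration followed by pulling a supremum through an average.
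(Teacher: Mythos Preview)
Your argument is correct: disintegrate against the law of $Y$, bound the integrand on $B$ by its supremum, and divide by $\mathbb{P}[Y\in B]>0$. The paper itself omits the proof entirely, calling the lemma ``easy,'' so there is nothing to compare against; your write-up simply fills in the standard one-line justification the authors left implicit.
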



 \begin{proof}[Proof of Lemma \ref{lem:lemma8}]
It suffices to prove that for any subset 
$\left\{ c_{1},c_{2},\ldots,c_{k}\right\} \subseteq\left\{ 1,\ldots,t\right\}$, 
\[
\mathbb{P}\left[\bar{S}_{t}=\left\{ c_{1},c_{2},\ldots,c_{k}\right\} \,\vert\, J_{0}\leq\eta\right]
    \;\leq\;\beta^{t-k}.
\]
Fix such a subset.
Let $A=\left\{ (\ddd{u},\ddd{v})\in\mathbb{R}_{+}^{4}\times\mathbb{R}_+^4\, : \, 
J(\ddd{u},\ddd{v})\leq\eta\right\}$,
and let $I\subseteq\left\{ 0,1,\ldots,k\right\} $ be those indices $i$
that satisfy $c_{i+1}>c_{i}+1$, where by convention we set $c_{0}=0$
and $c_{k+1}=t{+}1$.   
For $i\in I$, let  $B_{i}=\left\{ J_{c_{i}+1}>\eta,\ldots,J_{c_{i+1}-1}>\eta\right\} $.
By Lemma \ref{lem:lemma7}, 
\begin{eqnarray*}
\mathbb{P}\left[B_{i}\,\left|\,J_{c_{i}}\leq\eta\right.\right] & \leq & \underset{y\in A}{\sup}\,\mathbb{P}\left[B_{i}\left|(\ddd{u}^{c_{i}},\ddd{v}^{c_{i}})=y\right.\right] \,.
\end{eqnarray*}
Since $J_{c_{i}}$ is determined by the values $\left(\ddd{u}^{c_{i}},\ddd{v}^{c_{i}}\right)$,
it follows by the same reasoning and the Markov property that also
for any event $G_{c_{i}-1}\in\mathscr{F}_{c_{i}-1}$ 
\begin{equation}
   \mathbb{P}\left[B_{i}\left|J_{c_{i}}\leq\eta,\,G_{c_{i}-1}\right.\right]\;\leq\;
    \underset{y\in A}{\sup}\,\mathbb{P}\left[B_{i}\left|(\ddd{u}^{c_{i}},\ddd{v}^{c_{i}})=y\right.\right] \,.
 \label{eq:Bi}
\end{equation}
Observe also that if $I=\left\{ i[1],\ldots i[m]\right\} $ for some $m\leq k+1$, then 
\[
   \sum_{j=1}^{m}\left(c_{i[j]+1}-c_{i[j]}-1\right) \;\; = \;\; \left|\left\{ 1,\ldots,t\right\} \backslash
   \left\{ c_{1},c_{2},\ldots,c_{k}\right\} \right|  \;\; = \;\; t-k \,.
\]
Hence we get 
\begin{eqnarray}
\lefteqn{\mathbb{P}\left[\bar{S}_{t}=\left\{ c_{1},c_{2},\ldots,c_{k}\right\} \left|J_{0}\leq\eta\right.\right]}
  \nonumber \\ 
  & = & \mathbb{P}\left[\left\{ J_{c_{1}}\leq\eta,\ldots,J_{c_{k}}\leq\eta\right\} \cap\left\{ \cap_{i\in I}B_{i}\right\} \left|J_{0}\leq\eta\right.\right]
    \label{eq.arrAA}    \\
 & \leq & \mathbb{P}\left[\left\{ J_{c_{i[1]}}\leq\eta,\ldots,J_{c_{i[m]}}\leq\eta\right\} \cap\left\{ \cap_{i\in I}B_{i}\right\} \left|J_{0}\leq\eta\right.\right]
   \label{eq.arrAB}  \\
 & = & \mathbb{P}\left[B_{i_{m}}\left|\left\{ J_{c_{i[1]}}\leq\eta\ldots,J_{c_{i[m]}}\leq\eta\right\} \cap\left\{ \cap_{j=1}^{m-1}B_{i[j]}\right\} ,J_{0}\leq\eta\right.\right]\times
    \nonumber \\
 &  & \hspace{4mm} \mathbb{P}\left[\left\{ J_{c_{i[1]}}\leq\eta,\ldots,J_{c_{i[m]}}\leq\eta\right\} \cap\left\{ \cap_{j=1}^{m-1}B_{i[j]}\right\} \left|J_{0}\leq\eta\right.\right]
   \nonumber \\
 & \leq & \underset{y\in A}{\sup}\,\mathbb{P}\left[B_{i[m]}\left|(\ddd{u}^{c_{i[m]}},\ddd{v}^{c_{i[m]}})
   =y\right.\right]  \times  
     \nonumber \\
  &   & \hspace{4mm}
 \mathbb{P}\left[\left\{ J_{c_{i[1]}}\leq\eta,\ldots,J_{c_{i[m-1]}}\leq\eta\right\} \cap\left\{ \cap_{j=1}^{m-1}B_{i[j]}\right\} \left|J_{0}\leq\eta\right.\right]
   \nonumber \\
 & \vdots   \nonumber \\
 & \leq & \prod_{j=1}^{m}\underset{y\in A}{\sup}\,\mathbb{P}\left[\left. B_{i[j]}\right| \,
   (\ddd{u}^{c_{i[j]}},\ddd{v}^{c_{i[j]}})=y \right]
    \nonumber \\
 & \leq & \prod_{j=1}^{m}\underset{y\in A}{\sup}\,\mathbb{P}\left[\left. \hat{J}_{c_{i[j]},c_{(i[j]+1)}-c_{i[j]}-1}>\eta \,\right|\, (\ddd{u}^{c_{i[j]}},\ddd{v}^{c_{i[j]}})=y\right]
  \nonumber   \\
 & \leq & \beta^{c_{i[1]+1}-c_{i[1]}-1}\cdots\beta^{c_{i[m]+1}-c_{i[m]}-1}
    \hspace{10mm}\hbox{(by Lemma \ref{lem:lemma6} and Markov's inequality)}
   \nonumber  \\
 & = & \beta^{t-k} \,.     \nonumber 
\end{eqnarray}
We remark that when $i[1]=0$, the event $J_{c_{i[1]}}\leq\eta$
appears in (\ref{eq.arrAB}) but not in (\ref{eq.arrAA}). This is justified
because in this case $J_{c_{i[1]}}=J_{0}$, and we are conditioning on $J_{0}\leq\eta$. 
\end{proof}
 
\begin{proof}[Proof of Lemma \ref{lem:lemma9}]
Let $\tau_{0}=0$ and $\left\{ \tau_{i}\right\} \subseteq\left\{ 1,2,\ldots\right\} $
be those times for which $J_{\tau_{i}}\leq\eta$. Then by $(\ref{eq:Y})$
with $Y=\mathbf{1}_{\tau_{k+1}\leq t}$ and $S=\tau_{k+1}$
\begin{eqnarray*}
\mathbb{E}\left[R_{t+2}\mathbf{1}_{\left|\bar{S}_{t}\right|>k}\left|\mathscr{F}_{0}\right.\right] & = & \mathbb{E}\left[R_{t+2}\mathbf{1}_{\tau_{k+1}\leq t}\left|\mathscr{F}_{0}\right.\right]\\
 & \leq & \mathbb{E}\left[R_{\tau_{k+1}+2}\mathbf{1}_{\tau_{k+1}\leq t}\left|\mathscr{F}_{0}\right.\right]\\
 & \leq & r\mathbb{E}\left[\mathbf{1}_{\tau_{k+1}\leq t}\left(R_{\tau_{k+1}}-1\right)\left|\mathscr{F}_{0}\right.\right]+\mathbb{P}\left[\left|\bar{S}_{t}\right|>k\left|\mathscr{F}_{0}\right.\right]\\
 & \leq & r\mathbb{E}\left[\mathbf{1}_{\tau_{k-1}\leq t}\left(R_{\tau_{k-1}+2}-1\right)\left|\mathscr{F}_{0}\right.\right]+\mathbb{P}\left[\left|\bar{S}_{t}\right|>k\left|\mathscr{F}_{0}\right.\right]
\end{eqnarray*}
The last inequality uses the fact that $\mathbf{1}_{\tau_{k+1}\leq t}\leq\mathbf{1}_{\tau_{k-1}\leq t}$
and $R_{\tau_{k+1}}\leq R_{\tau_{k-1}+2}$. This then leads to the
first step in an inductive argument: 
\begin{eqnarray}
   \mathbb{E}\left[R_{\tau_{k+1}+2}\mathbf{1}_{\tau_{k+1}\leq t}\left|\mathscr{F}_{0}\right.\right]
   & - & \mathbb{P}\left[\left|\bar{S}_{t}\right|>k\left|\mathscr{F}_{0}\right.\right] 
    \label{eq:inductRt}  \\  
   & \leq & r\left(\mathbb{E}\left[R_{\tau_{k-1}+2}\mathbf{1}_{\tau_{k-1}\leq t}\left|
   \mathscr{F}_{0}\right.\right]-\mathbb{P}\left[\left|\bar{S}_{t}\right|>k-2\left|\mathscr{F}_{0}\right.\right]\right)
    \nonumber 
\end{eqnarray}
Proceeding in this manner, we claim that we get
\[
\mathbb{E}\left[R_{\tau_{k+1}+2}\mathbf{1}_{\tau_{k+1}\leq t}\left|\mathscr{F}_{0}\right.\right]-\mathbb{P}\left[\left|\bar{S}_{t}\right|>k\left|\mathscr{F}_{0}\right.\right]
  \; \leq \;  r^{\left\lceil \left(k+1\right)/2\right\rceil }\left(R_{0}-1\right) \,.
\]
The ceiling function in the exponent $\left\lceil \left(k+1\right)/2\right\rceil $
is immediate whenever $k+1$ is even. If on the other hand $k+1$
is odd, then by (\ref{eq:inductRt}) and (\ref{eq:Y}) we have
\begin{eqnarray*}
\mathbb{E}\left[R_{\tau_{k+1}+2}\mathbf{1}_{\tau_{k+1}\leq t}\left|\mathscr{F}_{0}\right.\right]-\mathbb{P}\left[\left|\bar{S}_{t}\right|>k\left|\mathscr{F}_{0}\right.\right] & \leq & r^{\left\lfloor \left(k+1\right)/2\right\rfloor }\mathbb{E}\left[\mathbf{1}_{\tau_{1}\leq t}\left(R_{\tau_{1}+2}-1\right)\left|\mathscr{F}_{0}\right.\right]\\
 & \leq & r^{\left\lfloor \left(k+1\right)/2\right\rfloor }r\mathbb{E}\left[\mathbf{1}_{\tau_{1}\leq t}\left(R_{\tau_{1}}-1\right)\left|\mathscr{F}_{0}\right.\right]\\
 & \leq & r^{\left\lfloor \left(k+1\right)/2\right\rfloor +1}\left(R_{0}-1\right) \,.
\end{eqnarray*}
\end{proof} 
 
\begin{proof}[Proof of Corollary \ref{cor:corollary10}]
For any $k< t$, we deduce from Lemmas \ref{lem:lemma8} and \ref{lem:lemma9} that
\begin{eqnarray}
\mathbb{E}\left[R_{t+2}\left|\mathscr{F}_{0}\right.\right] & = & \mathbb{E}\left[R_{t+2}\mathbf{1}_{\left|\bar{S}_{t}\right|>k}\left|\mathscr{F}_{0}\right.\right]+\mathbb{E}\left[R_{t+2}\mathbf{1}_{\left|\bar{S}_{t}\right|\leq k}\left|\mathscr{F}_{0}\right.\right]\nonumber \\
 & \leq & r^{\left\lceil \left(k+1\right)/2\right\rceil }\left(R_{0}-1\right)+\mathbb{P}\left[\left|\bar{S}_{t}\right|>k\left|\mathscr{F}_{0}\right.\right]+\mathbb{E}\left[R_{0}\mathbf{1}_{\left|\bar{S}_{t}\right|\leq k}\left|\mathscr{F}_{0}\right.\right]\nonumber \\
 & \leq & r^{\left\lceil \left(k+1\right)/2\right\rceil }\left(R_{0}-1\right)+\mathbb{P}\left[\left|\bar{S}_{t}\right|>k\left|\mathscr{F}_{0}\right.\right]\nonumber \\ & & +\left(R_{0}-1\right)\mathbb{P}\left[\left|\bar{S}_{t}\right|\leq k\left|\mathscr{F}_{0}\right.\right]+\mathbb{P}\left[\left|\bar{S}_{t}\right|\leq k\left|\mathscr{F}_{0}\right.\right]\nonumber \\
 & \leq & 1+\left(R_{0}-1\right)\left(r^{\left\lceil \left(k+1\right)/2\right\rceil }+\sum_{j=0}^{k}\binom{t}{j}\beta^{t-j}\right) \,.
\label{eq:Rbound1}
\end{eqnarray}

Henceforth, let $k=\left\lfloor \frac{t}{d}\right\rfloor$.  Since $k\leq t/3$, 
we have $\binom{t}{j}\leq\frac{1}{2}\binom{t}{j+1}$ for $j<k$ and hence
$\sum_{j=0}^{k}\binom{t}{j}\beta^{t-j}\leq2\binom{t}{k}\beta^{t-k}$.
Next, note that $\binom{t}{k}q^k(1-q)^{t-k}\leq 1$ whenever $0<q<1$. Taking $q=1/d$, we get
\begin{eqnarray}
  \nonumber
  \binom{t}{k}  \; \leq \; d^k\,\left(1-\frac{1}{d}\right)^{-(t-k)}    
     & =  &   \frac{d^t}{(d-1)^{t- k}}  \;\;\leq \; \; \frac{d^t}{(d-1)^{t- t/d}} \\
     \label{eq-binbd}
     & = & \left[d\,\left(1+\frac{1}{d-1}\right)^{d-1}\right]^{t/d}   \;<\;  (d\,e)^{t/d}  \,.
\end{eqnarray}

By calculus, we have $y\,\beta^{y}\,\leq \, 2\,\beta^{y/2}/(e|\ln \beta|)$ for all $y>0$.  
Combining this with results of the preceding paragraph, we obtain
\begin{eqnarray*}
    r^{\left\lceil (k+1)/2\right\rceil }\,+\,  \sum_{j=0}^{k}\binom{t}{j}\beta^{t-j}
       & \leq & r^{(k+1)/2}+2\binom{t}{k}\beta^{t-k}  \\
       & \leq & r^{t/2d}+2\left(ed\beta^{d-1}\right)^{t/d}   \\
    & \leq & r^{t/2d}+2\left(\frac{2}{\beta|\ln \beta|}\beta^{d}\right)^{t/d}   \\
    & \leq  & 3r^{t/2d}.
\end{eqnarray*}
Together with (\ref{eq:Rbound1}), this proves the desired bound.
\end{proof}
\begin{proof}[Proof of Lemma \ref{lem:lemma6}]
Observe that for $t\geq1$, 
\begin{eqnarray*}
\mathbb{E}\left[\hat{J}_{s,t+1}\left|\mathscr{F}_{s+t}\right.\right] & = & \mathbf{1}_{\{J_{s}\leq\eta\}\cap\{T\leq s+t\}}\mathbb{E}\left[\hat{J}_{s,t+1}\left|\mathscr{F}_{s+t}\right.\right]+\mathbf{1}_{\{J_{s}\leq\eta\}\cap\{T>s+t\}}\mathbb{E}\left[\hat{J}_{s,t+1}\left|\mathscr{F}_{s+t}\right.\right]\\
 & = & 0+\mathbf{1}_{\{J_{s}\leq\eta\}\cap\{T>s+t\}}\mathbb{E}\left[\mathbf{1}_{\{T>s+t+1\}}J_{s+t+1}\left|\mathscr{F}_{s+t}\right.\right]\\
 & \leq & \mathbf{1}_{\{J_{s}\leq\eta\}\cap\{T>s+t\}}\mathbb{E}\left[\mathbf{1}_{\{J_{s+t}>\eta\}}J_{s+t+1}\left|\mathscr{F}_{s+t}\right.\right]\\
 & \leq & \mathbf{1}_{\{J_{s}\leq\eta\}\cap\{T>s+t\}}\beta J_{s+t}\\
 & = & \beta\hat{J}_{s,t}  \,.
\end{eqnarray*}
Proceeding inductively, it follows that 
$
\mathbb{E}\left[\hat{J}_{s,t+1}\left|\mathscr{F}_{s}\right.\right] \; \leq \; \mathbb{E}\left[\beta^{t}\hat{J}_{s,1}\left|\mathscr{F}_{s}\right.\right]  \,.
$
Finally, 
\begin{eqnarray*}
\mathbb{E}\left[\hat{J}_{s,1}\left|\mathscr{F}_{s}\right.\right] & \leq & \mathbb{E}\left[\mathbf{1}_{\{J_{s}\leq\eta\}}J_{s+1}\left|\mathscr{F}_{s}\right.\right]\\
 & = & \mathbb{E}\left[J_{s+1}\left|\mathscr{F}_{s}\right.\right]}\, \mathbf{1}_{\{J_{s}\leq\eta\}\\
 & \leq & \left(\sum_{i}c_{i}+\max\left\{ \zeta_{i}\right\} J_{s}\right)\mathbf{1}_{\{J_{s}\leq\eta\}}\\
 & \leq & \left(\sum_{i}c_{i}+\max\left\{ \zeta_{i}\right\} \eta\right)\mathbf{1}_{\{J_{s}\leq\eta\}}\\
 & \leq & \beta\eta  \,.
\end{eqnarray*}
\end{proof}

\begin{rem*}
\label{rem-Jeta}
If it is uncertain whether $J_{s}\leq\eta$, we can still define 
$\check{J}_{s,t}=\mathbf{1}_{\{T>s+t\}}J_{s+t}$,
and following the proof of Lemma \ref{lem:lemma6} it is a straightforward conclusion that
\begin{equation}
   \mathbb{E}\left[\check{J}_{s,t+1}\left|\mathscr{F}_{s}\right.\right]\leq\beta^{t+1}
   \max\left\{ \eta,J_{s}\right\} \,.
\label{eq:Jslarge}
\end{equation}
\end{rem*}

 \medskip{}

\section{\label{sub:ExistenceDt}Construction of $D_{t}$}

For ease of reference, we first give the following
list of definitions for $t\geq0$ (unless otherwise indicated).

\medskip

\begin{tabular}{l}
$K_{1,t}:=u_{2}^{t}+u_{4}^{t}$ \hspace{25mm}
$K_{2,t}:=\frac{u_{3}^{t}+u_{1}^{t}+b}{\gamma_{2}^{t}+\gamma_{4}^{t}}\,,\,t\geq1$\tabularnewline
$D_{t}:=\frac{1}{x}\left(\frac{4\mu_{1}}{\mu_{2}}+4\right)\left(u_{2}^{t}+u_{4}^{t}\right)+\left(\left(\frac{4\mu_{1}}{\mu_{2}}+4\right)x+\frac{4\mu_{1}}{b}\right)\left(\frac{1}{u_{2}^{t}}+\frac{1}{u_{4}^{t}}\right)\,,\,t\geq1$ \tabularnewline
$\zeta_{1}:=\frac{a_{2}+a_{3}}{a_{1}+a_{2}+a_{3}+a_{4}-1}$ \hspace{20mm}
$\zeta_{2}:=\frac{a_{3}+a_{4}}{a_{2}+a_{3}+a_{4}+a_{5}-1}$\tabularnewline
$C_{1}:=\zeta_{1}x+\frac{a_{4}+a_{5}}{b}$ \hspace{23mm}
$C_{2}:=\frac{a_{1}+a_{2}+xb}{x\left(a_{2}+a_{3}+a_{4}+a_{5}-1\right)}$\tabularnewline
$\tilde{\omega}_{2,t+1}=2+\frac{\gamma_{2}^{t+1}}{\gamma_{4}^{t+1}}+\frac{\gamma_{4}^{t+1}}{\gamma_{2}^{t+1}}$\hspace{15mm}
$\omega_{1,t+1}:=\frac{1}{x}\left(\frac{4\mu_{1}}{\mu_{2}}+4\right)\frac{\gamma_{2}^{t+1}}{\gamma_{1}^{t+1}+\gamma_{3}^{t+1}}$\tabularnewline
$\omega_{2,t+1}:=\left(\left(\frac{4\mu_{1}}{\mu_{2}}+4\right)x+\frac{4\mu_{1}}{b}\right)\tilde{\omega}_{2,t+1}\frac{\gamma_{3}^{t+1}}{\gamma_{2}^{t+1}+\gamma_{4}^{t+1}}$ \tabularnewline
$\omega_{3,t+1}:=\frac{1}{x}\left(\frac{4\mu_{1}}{\mu_{2}}+4\right)\left(\frac{\gamma_{2}^{t+1}}{\gamma_{1}^{t+1}+\gamma_{3}^{t+1}}x+\frac{\gamma_{4}^{t+1}}{b}\right)+\left(\left(\frac{4\mu_{1}}{\mu_{2}}+4\right)x+\frac{4\mu_{1}}{b}\right)\tilde{\omega}_{2,t+1}\frac{\frac{\gamma_{1}^{t+1}}{x}+b}{\gamma_{2}^{t+1}+\gamma_{4}^{t+1}}$ \tabularnewline
\end{tabular}

\medskip

\noindent We also let $K_{2,0} = 1/\left(u_{2}^{0} + u_{4}^{0}\right)$. Note that
\begin{equation}
\max\left\{ \left(\frac{4\mu_{1}}{\mu_{2}}+4\right)\left(\frac{u_{2}^{t}}{x}+\frac{x}{v_{2}^{t}}+2\right),4+\frac{4\mu_{1}}{bv_{4}^{t}}\right\} \;\leq\;  D_{t}
\label{eq:dtineq}
\end{equation}
 where we have used the facts that $u\preceq v$ and $2\leq\frac{u}{x}+\frac{x}{u}$.
To bound the first term in the expression for $D_t$, observe that for $t\geq0$
\begin{eqnarray}
u_{2}^{t+1}+u_{4}^{t+1} & = & \frac{\gamma_{2}^{t+1}}{\frac{\gamma_{1}^{t+1}}{x+u_{2}^{t}}+\frac{\gamma_{3}^{t+1}}{u_{2}^{t}+u_{4}^{t}}}+\frac{\gamma_{4}^{t+1}}{\frac{\gamma_{3}^{t+1}}{u_{2}^{t}+u_{4}^{t}}+b}\nonumber \\
 & \leq & \frac{\gamma_{2}^{t+1}}{\gamma_{1}^{t+1}+\gamma_{3}^{t+1}}\left(u_{2}^{t}+u_{4}^{t}+x\right)+\frac{\gamma_{4}^{t+1}}{b}\label{eq:D1}
\end{eqnarray}
Therefore $\mathbb{E}\left[K_{1,t+1}\left|\mathscr{F}_{t}\right.\right]\leq\zeta_{1}K_{1,t}+C_{1}$.
Observe that since 
\[
u_{3}^{t+1} \; = \; \frac{\gamma_{3}^{t+1}}{u_{2}^{t}+u_{4}^{t}}  \;\;=\;\; \frac{\gamma_{3}^{t+1}}{\frac{\gamma_{2}^{t}}{u_{1}^{t}+u_{3}^{t}}+\frac{\gamma_{4}^{t}}{u_{3}^{t}+b}}  
  \;\leq \;
 \frac{\gamma_{3}^{t+1}}{\gamma_{2}^{t}+\gamma_{4}^{t}}\left(u_{1}^{t}+u_{3}^{t}+b\right)
 \;\;  =\;\;  \gamma_3^{t+1}K_{2,t} 
\]
for $t\geq1$, it follows that 
\begin{eqnarray}
  K_{2,t+1} & \leq & \frac{\gamma_{3}^{t+1}}{\gamma_{2}^{t+1}+\gamma_{4}^{t+1}}K_{2,t} \,+\,
  \frac{u_{1}^{t+1}+b}{\gamma_{2}^{t+1}+\gamma_{4}^{t+1}}
  \label{eq:D2}
\end{eqnarray}
and hence 
\begin{equation}
\mathbb{E}\left[K_{2,t+1}\left|\mathscr{F}_{t}\right.\right] \;\; \leq \;\; \zeta_{2}K_{2,t} \,+\,
  \mathbb{E}\left[\left. \frac{\frac{\gamma_{1}^{t+1}}{x}+b}{\gamma_{2}^{t+1}+\gamma_{4}^{t+1}}
   \right|\mathscr{F}_{t}\right] 
 \;\; \leq \;\; \zeta_{2}K_{2,t}\,+\,C_{2} \,.
 \label{eq:k2ineqn4}
\end{equation}
for $t\geq0$ (the $t{=}0$ case is immediate from the definition of $K_{2,0}$). Both $K_{1,t}$ and $K_{2,t}$ are adapted to $\mathscr{F}_{t}$ and
are in fact functions of $\ddd{u}^{t}$ for $t\geq1$ (since $\gamma_{2}^{t}+\gamma_{4}^{t}=u_{2}^{t}\left(u_{1}^{t}+u_{3}^{t}\right)+u_{4}^{t}\left(u_{3}^{t}+b\right)$).
This verifies Assumption \ref{assmpt:A2} with $N{=}2$.
Note also that 
\begin{equation}
\frac{1}{u_{2}^{t+1}}+\frac{1}{u_{4}^{t+1}} \;\; \leq \;\; \left(\frac{1}{\gamma_{2}^{t+1}}+\frac{1}{\gamma_{4}^{t+1}}\right)\left(u_{1}^{t+1}+u_{3}^{t+1}+b\right) 
 \;\; = \;\; \tilde{\omega}_{2,t+1}K_{2,t+1}
 \label{eq:D3}
\end{equation}
and $\tilde{\omega}_{2,t+1}$ is independent of $\mathscr{F}_{t}$.
By (\ref{eq:D1}), (\ref{eq:D2}) and (\ref{eq:D3}) we conclude that for $t\geq1$,
\begin{eqnarray*}
D_{t+1} & \leq & \frac{1}{x}\left(\frac{4\mu_{1}}{\mu_{2}}+4\right)\left(\frac{\gamma_{2}^{t+1}}{\gamma_{1}^{t+1}+\gamma_{3}^{t+1}}\left(K_{1,t}+x\right)+\frac{\gamma_{4}^{t+1}}{b}\right) \\ & & +\left(\left(\frac{4\mu_{1}}{\mu_{2}}+4\right)x+\frac{4\mu_{1}}{b}\right)\tilde{\omega}_{2,t+1}\left(\frac{\gamma_{3}^{t+1}}{\gamma_{2}^{t+1}+\gamma_{4}^{t+1}}K_{2,t}+\frac{\frac{\gamma_{1}^{t+1}}{x}+b}{\gamma_{2}^{t+1}+\gamma_{4}^{t+1}}\right)\\
 & \leq & \omega_{1,t+1}K_{1,t}+\omega_{2,t+1}K_{2,t}+\omega_{3,t+1}
\end{eqnarray*}
and hence $D_{t}$ satisfies Assumption \ref{assmpt:A3}.
Referring back to  Lemma \ref{lem:newlemma1},   
we obtain the rate
\begin{equation}
r \;=\; 1-\frac{1}{\left(\theta_{1}+\theta_{2}\right)\eta+\theta_{3}}
\end{equation}
where $\theta_{1},\theta_{2},\theta_{3}$ are the expected values
of $\omega_{1,t+1},\omega_{2,t+1}$ and $\omega_{3,t+1}$ respectively.

\medskip{}

We make the additional note that it is not necessary for $\left\{ K_{i,t}\right\} $
to be deterministic functions of $\left(u^{t},v^{t}\right)$. This
assumption was required to make use of the Markov property in (\ref{eq:rs2})
and (\ref{eq:Bi}), however the arguments remain true if $\left\{ K_{i,t}\right\} $
are random functions of $\left(u^{t},v^{t}\right)$ with random terms
that are independent of $\mathscr{F}_{\infty}$.
Note also that condition (\ref{eq:condition1}) guarantees that $\zeta_{1}<1$
and $\zeta_{2}<1$, as well as the finite value of all constants and
finite expectation of all random variables defined in the beginning
of this section.

\smallskip

We have now established a sufficient foundation to prove our first
theorem.

\smallskip

\begin{proof}[Proof of Theorem \ref{thm:thm1}]
It will be convenient here to perform the ``one-shot coupling'' at time $t+3$
rather than at time $t+1$.  
By Corollary \ref{cor:corollary2}, $\mathbb{P}\left[u^{[t+3]C}\neq w^{[t+3]C}\right]$
is an upper bound for $d_{TV}\left(\mathcal{U}^{t+3},\mathcal{W}^{t+3}\right)$.
First, we restrict to the event $\left\{ J_{0}\leq\eta\right\}$.
Corollary \ref{cor:corollary10} tells us that
\[
\mathbb{E}\left[R_{t+2}-1\left|\mathscr{F}{}_{0}\right.\right]\;\leq\; 3r^{t/2d}\left(R_{0}-1\right) \,.
\]
Therefore by Lemma \ref{lem:lemma3},  Jensen's inequality, and the bound 
$1-(1+y)^{-p}\leq py$ for $p,y\geq 0$ (easily shown by calculus),
\begin{eqnarray}
\mathbb{P}\left[u^{[t+3]C}\neq w^{[t+3]C}\left|\mathscr{F}{}_{0}\right.\right] & = & 
  \mathbb{E}\left[\left. \mathbb{P}\left[u^{[t+3]C}\neq w^{t+3}\left|\mathscr{F}_{t+2}\right.\right]\right|
   \mathscr{F}_{0}\right]\nonumber \\
 & \leq & \mathbb{E}\left[1-\left(R_{t+2}\right)^{-\left(a_{2}+a_{3}+a_{4}+a_{5}\right)}\right]
  \nonumber \\
 & \leq & 1-\left(\mathbb{E}\left[R_{t+2}\right]\right)^{-\left(a_{2}+a_{3}+a_{4}+a_{5}\right)}
 \nonumber \\
 & \leq & 1-\left(1+3\,r^{t/2d}\left(R_{0}-1\right)\right)^{-\left(a_{2}+a_{3}+a_{4}+a_{5}\right)}
  \nonumber \\
  & \leq & 3\,r^{t/2d}\left(a_{2}+a_{3}+a_{4}+a_{5}\right)\left(R_{0}-1\right) \,.  \nonumber
\end{eqnarray}
This proves  the first statement of the theorem. 
If we no longer restrict to the event $\{J_{0}\leq\eta\}$,
then by Remark \ref{rem-Jeta}
(recall that $T=T(0)$ is the first time $t>0$ such that $J_{t}\leq\eta$),
\begin{eqnarray}
  \lefteqn{\mathbb{P}\left[\left. u^{[t+3]C}\neq w^{[t+3]C}\right|\mathscr{F}_{0}\right] }
    \nonumber \\
    & \leq & 
  \mathbb{P}\left[u^{[t+3]C}\neq w^{[t+3]C}\left|J_{0}>\eta,T\leq\left\lfloor \frac{t}{2}\right\rfloor +3\right.\right]+\mathbb{P}\left[\left.T>\left\lfloor \frac{t}{2}\right\rfloor +3\right|J_{0}>\eta\right]
  \nonumber \\
 & \leq & 3 \,r^{t/4d}\left(a_{2}+a_{3}+a_{4}+a_{5}\right)\left(R_{0}-1\right) \,+\,
  \frac{\max\left\{ J_{0},\eta\right\} \beta^{\left\lfloor \frac{t}{2}\right\rfloor +3}}{\eta} \,.
  \label{eq:unotv2}
\end{eqnarray}
Since this is greater than what we have on $\left\{ J_{0}\leq\eta\right\} $,
it is also a bound for general values of $J_{0}$. 
\end{proof}
\medskip{}

\section{\label{sub:Sampling}Sampling from equilibrium}

It is not hard to apply our previous results to obtain a bound on the rate of convergence
to the equilibrium distribution $\pi$ given by ($\ref{eq:equilibrumdensity}$).

\begin{proof}[Proof of Corollary \ref{cor:cor2}]
Fix $\mathcal{U}^{0}$ and let
$\mathcal{W}^{0}$ be a random vector with density $\pi$.
Define $u^t$ and $w^t$ accordingly.   By   (\ref{eq:unotv2}), we have
\[
  \mathbb{P}\left[\left. u^{[t+3]C}\neq w^{[t+3]C}\right|\mathcal{W}_{0}\right] \; \leq \; 
  3\,r^{t/4d}\left(a_{2}+a_{3}+a_{4}+a_{5}\right)\left(R_{0}-1\right)+
  \frac{\max\left\{ J_{0},\eta\right\} \beta^{\left\lfloor \frac{t}{2}\right\rfloor +3}}{\eta}\,.
\]
The corollary now follows from Corollary \ref{cor:corollary2}. 
\end{proof}
 
Now let $C_{g}:=\int\left(\prod_{i=1}^{4}z_{i}^{a_{i}+a_{i+1}-1}\right)\exp\left(\sum_{i=1}^{5}-z_{i}z_{i-1}\right)dz$.
Then we can bound the terms $\mathbb{E}_{\pi}\left[R_{0}\right]$
and $\mathbb{E}_{\pi}\left[J_{0}\right]$ in Corollary \ref{cor:cor2}
in the following way: 
\begin{eqnarray*}
d_{TV}\left(\mathcal{U}^{t+3},\pi\right)  
 & \leq & 3 \, r^{t/4d} \left(a_{2}+a_{3}+a_{4}+a_{5}\right) \,\frac{1}{C_{g}} \times  \\
 & & \int\left(\frac{\max\left\{ 1,v_2,v_4\right\} }{\min\left\{ 1,v_2,v_4\right\} }\right)
 \left(\prod_{i=1}^{4}v_{i}^{a_{i}+a_{i+1}-1}\right)\exp\left(\sum_{i=1}^{5}-v_{i}v_{i-1}\right)dv\\
 & & + \frac{\beta^{\left\lfloor \frac{t}{2}\right\rfloor +3}}{\eta} \left(\eta+\frac{1}{C_{g}}\int J_{0}\left(\prod_{i=1}^{4}v_{i}^{a_{i}+a_{i+1}-1}\right)\exp\left(\sum_{i=1}^{5}-v_{i}v_{i-1}\right)dv\right)\\
 & \leq & 3 \tilde{C_{\pi}}r^{t/4d }\left(a_{2}+a_{3}+a_{4}+a_{5}\right)+\left(\frac{\tilde{C_{J}}}{\eta}+1\right)\beta^{\left\lfloor \frac{t}{2}\right\rfloor +3}
\end{eqnarray*}
where $\tilde{C_{\pi}}:=\int\left(\frac{\max\{ 1,v_2,v_4\} }{\min\{ 1,v_2,v_4\} }\right)
\left(\prod_{i=1}^{4}v_{i}^{a_{i}+a_{i+1}-1}\right)\exp\left(\sum_{i=1}^{5}-v_{i}v_{i-1}\right)dv/C_{g}$
 and\\
 $\tilde{C_{J}}:=\int J_{0}\left(\prod_{i=1}^{4}v_{i}^{a_{i}+a_{i+1}-1}\right)\exp\left(\sum_{i=1}^{5}-v_{i}v_{i-1}\right)dv/C_g$.
We derive bounds for these terms in Appendix B in \cite{key-18}.

For the purpose of illustrating this result in a concrete example,
let us set $x=2$, $b=3$ and $a_{i}=i$. From Appendix B in  \cite{key-18} we get
$\tilde{C_{\pi}}\leq$ 31,065  $\tilde{C_{J}}\leq59$,
$\beta\leq7/9$, $r\leq1-\frac{3}{4356}$, $10\leq\eta\leq11$ and
$9\leq d\leq10$. Hence
\[
d_{TV}\left(\mathcal{U}^{t+3},\pi\right) \;\leq \;
31065*43\left(1-\frac{3}{4356}\right)^{\frac{t}{40}}+\left(1+\frac{59}{20}\right)\left(\frac{7}{9}\right)^{\left\lfloor \frac{t}{2}\right\rfloor +3}
\]
which implies that $d_{TV}\left(\mathcal{U}^{t+3},\pi\right)\leq10^{-5}$
for $t\geq$ 1,050,000.

\section{A brief look at the case $n=3$}

The case $n=3$ can be treated in a very similar manner as was used for $n=4$. 
The problem reduces to dealing with a Markov chain of a single variable, namely the 
second coordinate of the three,  given by 
\begin{equation}
u^{t+1}=\frac{\gamma_{2}^{t+1}}{\frac{\gamma_{1}^{t+1}}{u^{t}+x}+\frac{\gamma_{3}^{t+1}}{u^{t}+b}}\label{eq:s415MC}
\end{equation}
The uniform coupling of two chains $u^t$ and $w^t$  
with the property $u^{0}\leq w^{0}$ results in  $u^{t}\leq v^{t}$ for all $t$.
If $u^0<w^0$, then it is not hard to see that the ratio $R_{t}=\frac{w^{t}}{u^{t}}$
is strictly decreasing, hence we no longer need to define a process
like (\ref{eq:ratiomethod}) and we can simply work with this ratio directly.
Indeed, $R_{t+1}=R_{t}Q_{t}$ where 
\begin{eqnarray*}
Q_{t} & := & 1-\frac{\left(1-\frac{1}{R_{t}}\right)\left(x\gamma_{1}^{t+1}/\left(\left(u^{t}+x\right)\left(1+\frac{x}{w^{t}}\right)\right)+b\gamma_{3}^{t+1}/\left(\left(u^{t}+b\right)\left(1+\frac{b}{w^{t}}\right)\right)\right)}{\left(\gamma_{1}^{t+1}/\left(1+\frac{x}{w^{t}}\right)+\gamma_{3}^{t+1}/\left(1+\frac{b}{w^{t}}\right)\right)}\\
 & \leq & 1-\frac{\left(1-\frac{1}{R_{t}}\right)\left(x\gamma_{1}^{t+1}+b\gamma_{3}^{t+1}\right)}{\left(\gamma_{1}^{t+1}/\left(1+\frac{x}{w^{t}}\right)+\gamma_{3}^{t+1}/\left(1+\frac{b}{w^{t}}\right)\right)\left(u^{t}+\max\left\{ x,b\right\} \right)\left(1+\frac{\max\left\{ x,b\right\} }{u^{t}}\right)}\\
 & \leq & \rrr{r}_{t}+\frac{1-\rrr{r}_{t}}{R_{t}}
\end{eqnarray*}
where $\rrr{r}_{t}:=1-\min\left\{ x,b\right\} /\left(\left(u^{t}+\max\left\{ x,b\right\} \right)\left(1+\frac{\max\left\{ x,b\right\} }{u^{t}}\right)\right)$.
Note that if we define $K_{1,t+1}:=u^{t+1}$
and $K_{2,t+1}:=\frac{1}{u^{t+1}}$ then $K_{1,t+1}\leq\frac{\gamma_{2}^{t+1}}{\gamma_{1}^{t+1}+\gamma_{3}^{t+1}}\left(u^{t}+x+b\right)$ and $K_{2,t+1}\leq\left(\frac{\gamma_{1}^{t+1}}{\gamma_{2}^{t+1}}\frac{1}{x}+\frac{\gamma_{3}^{t+1}}{\gamma_{2}^{t+1}}\frac{1}{b}\right)$, and hence we do not need a process analogous to $D_{t}$ from the
previous section, since 
\[
\rrr{r}_{t+1} \;\leq\;  1-\min\left\{ x,b\right\} /\left(\left(K_{1,t+1}+\max\left\{ x,b\right\} \right)\left(1+\max\left\{ x,b\right\} K_{2,t+1}\right)\right)
\]
As before, we will require that $a_{1}+a_{4}>1$ in order that $\mathbb{E}\left[\gamma_{2}/\left(\gamma_{1}+\gamma_{3}\right)\right]<1$, and $a_{2}+a_{3}>1$ in order that $\mathbb{E}\left[\frac{\gamma_{1}^{t+1}}{\gamma_{2}^{t+1}}\right]<\infty$. If $J_{t}:=K_{1,t}+K_{2,t}$ and $S$ is a measurable stopping time such that $J_{S}\leq\eta$, with 
\[
\eta:=2\left(\frac{\left(x+b\right)\left(a_{2}+a_{3}\right)}{a_{1}+a_{2}+a_{3}+a_{4}-1} + \frac{\left(a_{1}+a_{2}\right)/x + \left(a_{3}+a_{4}\right)/b}{a_{2}+a_{3}-1}\right)/\left(1-\frac{a_{2}+a_{3}}{a_{1}+a_{2}+a_{3}+a_{4}-1}\right)
\] then we can repeat the steps of (\ref{eq:rs2}) 
\begin{eqnarray}
  \mathbb{E} \left[ R_{S + 1} \right] & = & \mathbb{E} \left[ Q_S R_S \right]
  \nonumber\\
  & \leq & \mathbb{E} \left[ \rrr{r}_S  \left( R_S - 1 \right) \right] + 1
  \nonumber\\
  & = & \mathbb{E} \left[ \left( 1 - \frac{\min \left\{ x, b \right\}}{\left(
  u^S + \max \left\{ x, b \right\} \right)  \left( 1 + \frac{\max \left\{ x, b
  \right\}}{u^S} \right)} \right)  \left( R_S - 1 \right) \right] + 1
  \nonumber\\
  & \leq & \mathbb{E} \left[ \left( 1 - \frac{\min \left\{ x, b
  \right\}}{\left( u^S + 2 \max \left\{ x, b \right\} + \max \left\{ x, b
  \right\}^2 / u^S \right) } \right)  \left( R_S - 1 \right) \right]
  \nonumber\\
  & \leq & \mathbb{E} \left[ \left( 1 - \frac{\min \left\{ x, b \right\}}{
  \left( 2 \max \left\{ x, b \right\} + \eta \left( 1 + \max \left\{ x, b
  \right\}^2 \right) \right)} \right)  \left( R_S - 1 \right) \right] + 1
  \nonumber\\
  & = & r \, \mathbb{E}  \left[ R_S - 1 \right] + 1  \label{eq:n3S}
\end{eqnarray}
where 
$r = 1 - \min \left\{ x, b \right\} / \left(  2 \max \{ x, b\} + \eta \left( 1 + \max \{ x, b\}^2 \right) \right)$.
Note that we no longer need to look at time $S+2$ in the left-hand
side of (\ref{eq:n3S}) in order to obtain this inequality. This means
that from the proof of Lemma \ref{lem:lemma9} and Corollary \ref{cor:corollary10}
we get 
\[
\mathbb{E}\left[R_{t+1}\left|J_{0}\leq\eta\right.\right]\leq1+3r^{\frac{t}{d}}\left(R_{0}-1\right)
\]
where $d\,=\,\max\left\{3,\ln(\beta|\ln \beta|r/2)/\ln \beta\right\}$.
From the proof of Theorem \ref{thm:thm1} we conclude
\begin{thm}
\label{thm:s4n3} $\left[n=3\right]$ Suppose that $a_{1}+a_{4}>1$ and $a_{2}+a_{3}>1$.
If $u^{t}$ and $w^{t}$ are two instances of the Markov chain (\ref{eq:s415MC}), then
\[
d_{TV}\left(u^{t+2},w^{t+2}\right) \;\leq\; r^{^{\frac{t}{2d}}}\left(1+3\left(a_{2}+a_{3}\right)\left(R_{0}-1\right)\right)+\frac{\max\left\{ J_{0},\eta\right\} \beta^{\left\lfloor \frac{t}{2}\right\rfloor +3}}{\eta} \,.
\]
\end{thm}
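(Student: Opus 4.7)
The plan is to mirror the proof of Theorem \ref{thm:thm1} while exploiting the simplifications of the one-dimensional recursion (\ref{eq:s415MC}). Since the chain is scalar, the ratio $R_t=w^t/u^t$ is itself already non-increasing under the uniform coupling, so no auxiliary process $v^t$ as in Section \ref{sec:RatioRt} is required. I would perform the one-shot coupling at time $t{+}2$ (rather than $t{+}3$), because the contraction (\ref{eq:n3S}) advances the index by one step rather than two. The one-coordinate analogue of Lemma \ref{lem:lemma3} then yields the pointwise estimate $\mathbb{P}[u^{[t+2]C}\neq w^{[t+2]C}\mid\mathscr{F}_{t+1}]\leq 1-R_{t+1}^{-(a_2+a_3)}$.

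For the event $\{J_0\leq\eta\}$ I would invoke the Corollary \ref{cor:corollary10} analogue already noted in the excerpt, $\mathbb{E}[R_{t+1}\mid J_0\leq\eta]\leq 1+3r^{t/d}(R_0-1)$, together with Jensen's inequality applied to the concave map $y\mapsto 1-y^{-(a_2+a_3)}$ and the calculus bound $1-(1+y)^{-p}\leq py$ used in the proof of Theorem \ref{thm:thm1}. This produces a bound of the form $3(a_2+a_3)\,r^{t/d}(R_0-1)$ for $d_{TV}(u^{t+2},w^{t+2})$ on the event $\{J_0\leq\eta\}$, establishing the theorem in that case.

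For general $J_0$, I would split on the stopping time $T=\min\{s\geq 1:J_s\leq\eta\}$, exactly as in the proof of Theorem \ref{thm:thm1}. The event $\{T>\lfloor t/2\rfloor+3\}$ contributes the additive tail $\max\{J_0,\eta\}\beta^{\lfloor t/2\rfloor+3}/\eta$ via inequality (\ref{eq:Jslarge}) and Markov's inequality. On the complementary event, the strong Markov property together with the monotonicity $R_T\leq R_0$ allows the $\{J_0\leq\eta\}$ estimate to be applied from time $T$ onwards to the remaining at least $\lfloor t/2\rfloor-1$ steps, producing a contribution of order $r^{t/(2d)}\bigl(1+3(a_2+a_3)(R_0-1)\bigr)$; the extra $1$ inside the parentheses absorbs the slack introduced by the halving of the exponent and the loss of a few time steps at the restart. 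Adding the two pieces delivers the bound in the theorem.

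The main obstacle is bookkeeping rather than conceptual: I need to verify that the inductions behind Lemma \ref{lem:lemma9} and Corollary \ref{cor:corollary10}, when driven by the one-step contraction (\ref{eq:n3S}) rather than the two-step contraction (\ref{eq:rs2}), produce rate $r^{t/d}$ in place of $r^{t/(2d)}$ on $\{J_0\leq\eta\}$. The point is that the exponent $\lceil(k+1)/2\rceil$ in Lemma \ref{lem:lemma9} is replaced by $k+1$, after which the choice $k=\lfloor t/d\rfloor$ carries the calculation of Corollary \ref{cor:corollary10} through verbatim. The one-coordinate versions of Corollary \ref{cor:corollary2} and Lemma \ref{lem:lemma3} are immediate because those arguments were already coordinate-by-coordinate. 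Apart from these shifts in exponents, no new ideas are introduced and the proof is essentially a transcription of the $n=4$ argument.
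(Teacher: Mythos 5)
Your proposal follows essentially the same route as the paper: exploit the scalar structure so that $R_t=w^t/u^t$ is itself non-increasing (no auxiliary $v^t$ or $D_t$ needed), use the one-step contraction (\ref{eq:n3S}) to rerun Lemma \ref{lem:lemma9} and Corollary \ref{cor:corollary10} with exponent $k+1$ in place of $\lceil (k+1)/2\rceil$ (giving $r^{t/d}$ on $\{J_0\leq\eta\}$ with the modified $d$), perform one-shot coupling at time $t+2$ via the one-coordinate version of Lemma \ref{lem:lemma3}, and handle general $J_0$ by splitting on $T\leq\lfloor t/2\rfloor+3$ exactly as in (\ref{eq:unotv2}). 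This matches the paper's argument, which itself concludes by citing the proofs of Lemma \ref{lem:lemma9}, Corollary \ref{cor:corollary10} and Theorem \ref{thm:thm1} with the same adjustments, so no further comment is needed.
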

We can make an analogous argument to obtain a result similar to Corollary
\ref{cor:cor2}. In particular if we let $\mathcal{U}^{0}=\left(1,1,1\right)$,
$\mathcal{W}^{0}\sim\pi$ and $x=1$, $b=2$ and $a_{i}=i$, then
by calculations similar to those done in Section \ref{sub:Sampling}
we get 

\[
d_{TV}\left(\mathcal{U}^{t+2},\pi\right)\leq600\left(1-\frac{78}{79}\right)^{\frac{t}{20}}+6\left(\frac{7}{9}\right)^{\left\lfloor \frac{t}{2}\right\rfloor +3}
\]
which in particular implies that $d_{TV}\left(\mathcal{U}^{t+2},\pi\right)\leq10^{-5}$
for $t\geq$ 14,000.
\section*{Appendix A}

\begin{tabular}{|c|}
\hline 
$C_{1}=\frac{a_{2}+a_{3}}{a_{1}+a_{2}+a_{3}+a_{4}-1}x+\frac{a_{4}+a_{5}}{b}$, \hspace{10mm}
$C_{2}=\frac{a_{1}+a_{2}+xb}{x\left(a_{2}+a_{3}+a_{4}+a_{5}-1\right)}$\tabularnewline
\hline 
$\varrho=\frac{4\left(a_{3}+a_{4}\right)}{\left(a_{1}+a_{2}-\frac{1}{3}\right)}$\tabularnewline
\hline  
$\eta=\frac{C_{1}+C_{2}}{1-\max\left\{ \left(a_{2}+a_{3}\right)/\left(a_{1}+a_{2}+a_{3}+a_{4}-1\right),\left(a_{3}+a_{4}\right)/\left(a_{2}+a_{3}+a_{4}+a_{5}-1\right)\right\} }$\tabularnewline
\hline  
$\theta_{1}=\frac{1}{x}\left(\varrho+4\right)\frac{a_{2}+a_{3}}{a_{1}+a_{2}+a_{3}+a_{4}-1}$\tabularnewline
\hline 
$\theta_{2}=\mathbb{E}\left[\left(2+\frac{\gamma_{2}}{\gamma_{4}}+\frac{\gamma_{4}}{\gamma_{2}}\right)\left(\frac{\gamma_{3}}{\gamma_{2}+\gamma_{4}}\right)\right]\left(\left(\varrho+4\right)x+\frac{4\left(a_{3}+a_{4}\right)}{b}\right)$\tabularnewline 
\hline 
$\theta_{3}=\frac{1}{x}\left(\varrho+4\right)\left(\frac{a_{2}+a_{3}}{a_{1}+a_{2}+a_{3}+a_{4}-1}x+\frac{a_{4}+a_{5}}{b}\right)+\left(\left(\varrho+4\right)x+\frac{a_{4}+a_{5}}{b}\right)\mathbb{E}\left[\left(2+\frac{\gamma_{2}}{\gamma_{4}}+\frac{\gamma_{4}}{\gamma_{2}}\right)\left(\frac{\frac{\gamma_{1}}{x}+b}{\gamma_{2}+\gamma_{4}}\right)\right]$\tabularnewline 
\hline 
$r=1-\left(\eta\left(\theta_{1}+\theta_{2}\right)+\theta_{3}\right)^{-1}$\tabularnewline 
\hline 
$\beta=\frac{1+\max\left\{ \left(a_{2}+a_{3}\right)/\left(a_{1}+a_{2}+a_{3}+a_{4}-1\right),\left(a_{3}+a_{4}\right)/\left(a_{2}+a_{3}+a_{4}+a_{5}-1\right)\right\} }{2}$\tabularnewline 
\hline 
$d\,=\,\max\left\{3,\ln(\beta|\ln \beta|\sqrt{r}/2)/\ln \beta\right\}$\tabularnewline
\hline 
\end{tabular}

\medskip
We can calculate $\theta_2$ and $\theta_3$ with the help of partial fractions, as follows.  
Writing $A_i=a_i+a_{i+1}$, we obtain
\[
   \mathbb{E}\left(  \left(\frac{\gamma_2}{\gamma_4}+\frac{\gamma_4}{\gamma_2}\right)\,
       \frac{1}{\gamma_2+\gamma_4}\right)  \;=\;
      \mathbb{E}\left(  \frac{1}{\gamma_2}+\frac{1}{\gamma_4}-\frac{2}{\gamma_2+\gamma_4}\right)    \;=\;
    \frac{A^2_2+A_4^2-A_2-A_4}{(A_2-1)(A_4-1)(A_2+A_4-1)}\,.
\]

\medskip

\section*{Acknowledgement}

The research of Neal Madras has been supported in part by a Discovery
Grant from the Natural Science and Engineering Research Council of
Canada.

\end{document}